\begin{document}
\title[limiting behavior of general Euler system]
{Limiting behavior of scaled general Euler equations of compressible fluid flow}

\author[ Sahoo and Sen]
{ Manas R. Sahoo and Abhrojyoti Sen}

\address{ Manas R. Sahoo and Abhrojyoti Sen \newline
School of Mathematical Sciences\\
National Institute of Science Education and Research, Bhubaneswar, 752050, India\\
Homi Bhaba National Institute (HBNI), Training School Complex\\
Anushakti Nagar, Mumbai, 400094, India.}
\email{manas@niser.ac.in, abhroyoti.sen@niser.ac.in}

\thanks{Submitted}
\subjclass[2010]{35L67, 35L65}
\keywords{General Euler system; Brio systems; Riemann problem; Delta waves}

\begin{abstract}
 The aim of this article is to study the limiting behavior of the solutions for the scaled generalized Euler equations of compressible fluid flow.
When the initial data is of Riemann type, we showed the existence of solution which consists of shock waves and rarefaction waves and that the distributional limit of the solutions for this system converges to the solution of a non-strictly hyperbolic system, called one dimensional model for large scale structure formation of universe as the scaling parameter vanishes. An explicit entropy and entropy flux pair is also constructed for the particular flux function (Brio system) and it is shown that the solution constructed is entropy admissible. This is a continuation of our work\cite{sahoo}.
\end{abstract}

\maketitle
\numberwithin{equation}{section}
\numberwithin{equation}{section}
\newtheorem{theorem}{Theorem}[section]
\newtheorem{remark}[theorem]{Remark}
\newtheorem{lem}[theorem]{Lemma}
\newtheorem{definition}[theorem]{Definition}

\section{Introduction} 
 General Euler equations of compressible fluid flow reads
\begin{equation}
\begin{cases}
\begin{aligned}
&u_t +(\frac{u^2}{2}+f(\rho))_x =0,\\
&\rho_t +(u \rho +g(\rho))_x=0.
\end{aligned}
\label{g1.1}
\end{cases}
\end{equation}
We take the initial conditions
\begin{equation}
u(x,0)=u_{0}(x),\,\, \rho(x,0)=\rho_0 (x).
\label{e1.2}
\end{equation}
For the above system, the assumptions on $f(\rho)$ and $g(\rho)$ are the following:
\begin{itemize}
\item[H1.] $f$, $g$ $\in$ $C^3[0,\infty)$ and $f_1=\frac{f^{\prime}}{\rho}\in C^2[0,\infty)$, $g_1=\frac{g^{\prime}}{\rho}\in C^2[0,\infty)$.
\item[H2.] $f_1\geq d$ and $2f^{\prime}_1+g^{\prime}_1(r_1+g_1)\geq 0$, $2f^{\prime}_1+g^{\prime}_1(r_1-g_1)\geq 0$, where $d$ is a fixed positive constant and $r_1=\sqrt{g^2_1+4f_1}$.
\end{itemize}
Under these assumptions H1-H2, the system \eqref{g1.1} is strictly hyperbolic and genuinely nonlinear in both of its characteristic fields\cite{Lu}. Here we are interested in the system \eqref{g1.1} with the following conditions on $f$ and $g$.
\begin{itemize}
\item[A1.] $f\in C^2(0,\infty)$, $f^{\prime}>0$ and $f^{\prime\prime}>0$.
\item[A2.] $g\in C^1(0,\infty)$ and $g$ is any linear decreasing function.
\end{itemize}
It can be easily observed that our assumptions on $f$ and $g$ are compatible with H1 and H2. Since our $g$ is any linear decreasing function, it is enough to work with $g(\rho)=-\rho$. So the system \eqref{g1.1} can be expressed as: 
\begin{equation}
\begin{cases}
\begin{aligned}
&u_t +(\frac{u^2}{2}+f(\rho))_x =0,\\
&\rho_t +(u \rho -\rho)_x=0.
\end{aligned}
\label{g1.2}
\end{cases}
\end{equation}
If $f(\rho)=\frac{\rho^2}{2}$, we get the following Brio system.
\begin{equation}
\begin{cases}
\begin{aligned}
&u_t +(\frac{u^2+\rho^2}{2})_x =0,\\
&\rho_t +(\rho (u-1))_x=0.
\end{aligned}
\label{e1.3}
\end{cases}
\end{equation}
Therefore the system\eqref{g1.2} can be regarded as a generalization of the physically significant system known as Brio system\eqref{e1.3}. The  Brio system \eqref{e1.3} was first derived by M. Brio \cite{brio} and mainly arises as a simplified model in  ideal magnetohydrodynamics(MHD). The study of MHD is based on the idea that the currents in the magnetic fields are inherent from moving electrically conducting fluids.  In this system $(u, \rho)$ represents the velocity of the fluid whose dynamics is determined by magnetohydrodynamics forces. In \cite{lefloch}, equation \eqref{e1.3} was compared with a system whose first equation avoids the non linear term $\frac{1}{2}\rho^2$, such as
\begin{equation}
\begin{cases}
\begin{aligned}
&u_t +(\frac{u^2}{2})_x =0,\\
&\rho_t +(\rho (u-1))_x=0.
\end{aligned}
\label{le1.3}
\end{cases}
\end{equation}
It was shown in \cite{lefloch} that the solution for Riemann problem of the system \eqref{le1.3} contains $\delta$-waves. In \cite{hk1},  $\delta$-shocks are observed in the solution of \eqref{e1.3} by a complex-valued generalization of weak asymptotic method \cite{hk3,danilov} and in \cite{cor} similar result is obtained via a distributional product.  Although uniqueness was an unresolved issue for both of them. Recently the  question of uniqueness is also settled in \cite{hk2} by introducing some nonlinear change of variable in the flux function of the first equation of \eqref{e1.3}.\\
In our present work we are interested in the limiting behavior of the solutions for the scaled version  of \eqref{g1.2} as the scaling parameter approaches zero.  
The scaled version of the system\eqref{g1.2} can be written as
\begin{equation}
\begin{cases}
\begin{aligned}
&u_t +(\frac{u^2}{2}+\epsilon f(\rho))_x =0,\\
&\rho_t +(u \rho -\epsilon \rho)_x=0,
\end{aligned}
\label{sc1.6}
\end{cases}
\end{equation}
where $\epsilon>0$ is introduced as a scaling parameter. Recently \cite{sahoo} deals with the system 
\begin{equation}
\begin{cases}
\begin{aligned}
&u_t +(\frac{u^2}{2}+\epsilon f(\rho))_x =0,\\
&\rho_t +(\rho u)_x=0.
\end{aligned}
\label{pre1.6}
\end{cases}
\end{equation}
One can see that the system \eqref{pre1.6} can be obtained by taking $g(\rho)=0$ and introducing the scaling parameter $\epsilon$ in the system \eqref{g1.1}.
It can be readily seen  that as $\epsilon\rightarrow 0$, formally the above systems \eqref{sc1.6} and \eqref{pre1.6} becomes  
\begin{equation}
\begin{cases}
\begin{aligned}
u_t +(\frac{u^2}{2})_x &=0,\,\,\, x\in \mathbb{R}, t>0\\
\rho_t +(\rho u)_x&=0,\,\,\, x\in \mathbb{R}.
\end{aligned}
\end{cases}
\label{e1.1}
\end{equation}
In \cite{sahoo}, it is shown that the solution of the system \eqref{pre1.6} converges to the solution of \eqref{e1.1} in the sense of distribution. As a continuation of \cite{sahoo},  here our goal is to obtain  the solution of \eqref{e1.1} as a  distributional limit of the solution of \eqref{sc1.6}.\\
The above equation\eqref{e1.1} is a one dimensional model for the large scale structure formation of universe\cite{z1}. This is an example of a non-strictly hyperbolic system, which was studied by many authors \cite{ j2, ma1, ma2, o1, dx, tan}, started with the work of Korchinski\cite{k1}. We study the existence of solution for the equation \eqref{sc1.6} for Riemann type initial data, namely,  
\begin{equation}
\begin{pmatrix}
         u_0 (x)   \\
            \rho_0 (x) \\
         \end{pmatrix}
   =
\begin{cases}
\begin{pmatrix}
         u_l  \\
            \rho_l \\
         \end{pmatrix},\,\,\,\,\,\,\, \textnormal{if}\,\,\,\,\,\,\  x<0\\
 \begin{pmatrix}
         u_r   \\
            \rho_r \\
         \end{pmatrix},\,\,\,\,\,\,\, \textnormal{if}\,\,\,\,\,\,\  x>0.
\end{cases}
\label{e1.4}
\end{equation}
Note that for $\epsilon>0$, the system \eqref{sc1.6} is strictly hyperbolic and both the characteristics fields are genuinely nonlinear (see section $2$). For a strictly hyperbolic system whose characteristics field are either genuinely nonlinear or linearly degenerate, the Lax theory\cite{ba1, da1,lax} can be applied to show the existence of solution for close-by Riemann type initial data. But for our system\eqref{sc1.6}, we show that the existence of solution does not depend on the closeness of initial data. Summarizing the above paragraphs, the main result can be stated as following.
\begin{theorem}
The admissible solution of the system \eqref{sc1.6} with Riemann type initial data \eqref{e1.4} converges to the solution of the non strictly hyperbolic model \eqref{e1.1} in the sense of distribution when the parameter $\epsilon$ goes to zero.
\end{theorem}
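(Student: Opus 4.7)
The plan is to solve the Riemann problem for \eqref{sc1.6} explicitly for every fixed $\epsilon>0$, then analyze the asymptotic behavior of each wave component as $\epsilon\downarrow 0$, and finally identify the distributional limit with the solution of \eqref{e1.1}.

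First, I would compute the eigenvalues of the flux Jacobian,
\begin{equation*}
\lambda_{1,2}^\epsilon(u,\rho) = u - \tfrac{\epsilon}{2} \mp \tfrac{1}{2}\sqrt{\epsilon^2 + 4\epsilon\rho f'(\rho)},
\end{equation*}
and verify, under A1, that \eqref{sc1.6} is strictly hyperbolic and genuinely nonlinear whenever $\epsilon>0$ and $\rho>0$. Integrating the right eigenvectors yields the rarefaction curves, while the Rankine--Hugoniot relations $s[u]=[\tfrac12 u^2+\epsilon f(\rho)]$ and $s[\rho]=[u\rho-\epsilon\rho]$ give closed-form shock speeds. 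Combining the Lax-admissible halves of these curves produces a single monotone wave curve $W_i^\epsilon(\cdot;u_\ell,\rho_\ell)$ for $i=1,2$. Rather than invoking a local fixed-point argument of the type in Lax's theorem, I would establish a global monotonicity: along each wave curve $u$ is a strictly monotone function of $\rho$ sweeping an unbounded range, so the forward $1$-curve from the left state and the backward $2$-curve from the right state must intersect at a unique intermediate state $(u_m^\epsilon,\rho_m^\epsilon)$ for every choice of Riemann data. This produces the self-similar admissible solution $(u^\epsilon,\rho^\epsilon)$.

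The heart of the proof is the passage $\epsilon\downarrow 0$. The spectrum above shows both characteristic speeds coalesce to $u$ with an $O(\sqrt{\epsilon})$ spread, so each rarefaction fan collapses to a single characteristic, while the intermediate state either converges to a bounded value (expansive data $u_\ell<u_r$) or its density blows up in a controlled way so that the total mass carried on the shrinking intermediate strip has a finite limit (compressive data $u_\ell>u_r$). Inserting $(u^\epsilon,\rho^\epsilon)$ into the distributional formulation and splitting the integrals over the explicit wave regions, I would verify
\begin{equation*}
u^\epsilon\rightharpoonup u,\qquad \rho^\epsilon\rightharpoonup \rho,\qquad \epsilon f(\rho^\epsilon)\rightharpoonup 0 \quad \text{in } \mathcal{D}',
\end{equation*}
where $(u,\rho)$ solves \eqref{e1.1}, possibly containing a Dirac component in $\rho$ supported on the shock curve $x=st$.

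The main obstacle is the compressive regime, where the strip between the $1$- and $2$-shocks of the $\epsilon$-solution collapses onto a single line while $\rho^\epsilon$ concentrates. One must show simultaneously that the concentrated mass and its propagation speed match the delta-shock of the pressureless limit system, and that the singular pressure term $\epsilon f(\rho_m^\epsilon)$ still vanishes in $\mathcal{D}'$ even though $f(\rho_m^\epsilon)$ is unbounded. Both facts should follow from exact asymptotic expansions of $u_m^\epsilon$, $\rho_m^\epsilon$ and the shock speeds in powers of $\epsilon$, which the explicit formulas from the construction step make tractable. Entropy admissibility of the $\epsilon$-solution via Lax's inequalities---or, for the Brio flux, via the explicit entropy pair constructed later in the paper---then transfers to the limit.
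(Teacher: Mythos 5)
Your proposal follows essentially the same route as the paper: an explicit Riemann solution for each fixed $\epsilon>0$ obtained from global monotonicity of the admissible wave curves (bypassing the local Lax theory), an asymptotic analysis of the intermediate state showing $\rho^*_\epsilon\to\infty$ with $\epsilon f(\rho^*_\epsilon)$ remaining bounded so that in the compressive case the mass concentrates into a weighted Dirac measure on $x=\frac{u_l+u_r}{2}t$ while a vacuum forms in the expansive case, together with entropy admissibility via the explicit convex entropy pair. The only point you leave implicit is the meaning of the product $u\rho$ in the limit system when $\rho$ carries an atom on the discontinuity line of $u$; the paper resolves this by adopting the measure-theoretic weak formulation of Definition \ref{defsol} (with $u$ assigned the value $\frac{u_l+u_r}{2}$ on the shock line), which is the precise sense in which the limit solves \eqref{e1.1}.
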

We propose a different regularization for the system \eqref{e1.1} by introducing the parameter $\epsilon$ in the flux function of \eqref{g1.1}. Introduction of the scaling parameter $\epsilon>0$ is motivated as follows: The flux $(\frac{u^2}{2}+ \epsilon f(\rho), \rho u-\epsilon \rho)$  in \eqref{sc1.6} compared to the flux $(\frac{u^2}{2}, \rho u)$  in system \eqref{e1.1} gives a more regularized effect. Besides this in presence of $\epsilon>0$ there is no concentration in the solution, however in the absence of $\epsilon$, the system \eqref{sc1.6} becomes \eqref{e1.1} and concentration can occur in the solution which makes it highly singular.

 In this paper first we find solution for the system\eqref{sc1.6} for \textit{any Riemann type initial data} and the solution is a combination of shock and rarefaction waves. Then we study the limiting behavior of these solutions as the parameter $\epsilon$ approaches to zero. We show that the limit is a solution for \eqref{e1.1} which is also vanishing viscosity limit \cite{j2}. 
This type of singular flux function limit approach is very useful for certain systems and can be viewed as an alternative approach of vanishing viscosity to construct solution (which may be singular in nature) for non-strictly hyperbolic systems. In this regard, we refer \cite{Meina} for LeRoux system and \cite{Li1,chen,n1} for isentropic and nonisentropic system of gas dynamics. On a slightly different note, one can see \cite{Meina2} where Riemann solution for \eqref{e1.1} is obtained via a linear approximations of flux function.\\
The  paper is organized as follows. In section $2$, shock and rarefaction curves are described for the system\eqref{sc1.6}. In section $3$, shock-wave solution is constructed  for \eqref{sc1.6}-\eqref{e1.4},  when $u_l > u_r$  and the distributional limit is obtained when the parameter $\epsilon$ approaches to zero and it is shown that limit satisfies \eqref{e1.1} in the sense of the definition\eqref{defsol}. In section $4$, an entropy-entropy flux pair is found for \eqref{e1.3} which satisfies entropy condition for small $\epsilon$. In section $5$, the solution for the case $u_l \leq u_r$  is obtained as a combination of other elementary waves. Lastly in section $6$, we discuss the case when $f(\rho)=\frac{\rho^2}{2}$ and $g(\rho)=-\rho^2$. Also further possibilities are discussed for some general $f$ and $g$.
\section{The Riemann problem}
The co-efficient matrix $A(u,\rho)$  of  the equation \eqref{sc1.6}  is  given by 
\begin{equation*}
A(u,\rho)=\quad
\begin{pmatrix}
u & {\epsilon}f^{\prime}(\rho) \\
\rho & u-\epsilon
\end{pmatrix}.
\quad  
\end{equation*}
Eigenvalues for this co-efficient matrix are the following:
$\lambda_1(u,\rho)=u-\frac{\epsilon}{2}-\frac{1}{2}\sqrt{4\epsilon \rho f^{\prime}(\rho)+\epsilon^2}$ and $\lambda_2(u,\rho)=u-\frac{\epsilon}{2}+\frac{1}{2}\sqrt{4\epsilon \rho f^{\prime}(\rho)+\epsilon^2}$  and the eigenvectors corresponding to $\lambda_1$ and $\lambda_2$ are $X_1=(\frac{\frac{\epsilon}{2}-\frac{1}{2}\sqrt{4\epsilon \rho f^{\prime}(\rho)+\epsilon^2}}{\rho},1)$ and $X_2=(\frac{\frac{\epsilon}{2}+\frac{1}{2}\sqrt{4\epsilon \rho f^{\prime}(\rho)+\epsilon^2}}{\rho},1)$ respectively. Now consider,
\begin{equation*}
\begin{aligned}
\nabla\lambda_1.X_1=\frac{\frac{\epsilon}{2}-\frac{1}{2}\sqrt{4\epsilon \rho f^{\prime}(\rho)+\epsilon^2}}{\rho}-\frac{\epsilon(\rho f^{\prime\prime}(\rho)+f^{\prime}(\rho))}{\sqrt{4\epsilon \rho f^{\prime}(\rho)+\epsilon^2}}
\end{aligned}
\end{equation*}
As $f(\rho)$ and $f^{\prime}(\rho)$ are increasing, we have $\nabla\lambda_1.X_1<0$. A similar calculation shows that $\nabla\lambda_2.X_2>0.$
So each characteristic field is genuinely nonlinear for problem \eqref{sc1.6}. \\
\textbf{Shock curves}: The shock curves $s_1$,$s_2$ through $(u_l,\rho_l)$ and $(u_r,\rho_r)$ are derived from the Rankine-Hugoniot conditions
\begin{equation}
\begin{aligned}
\lambda(u_l-u_r)=&(\frac{1}{2}u_l^2+\epsilon f(\rho_l))-(\frac{1}{2}u_r^2+\epsilon f(\rho_r)),\\
\lambda(\rho_l-\rho_r)=&(\rho_l u_l-\epsilon)-(\rho_r u_r-\epsilon).
\label{2.1}
\end{aligned}
\end{equation}
Eliminating $\lambda$ from \eqref{2.1} and simplifying further, one can get the following quadratic equation
\begin{equation}
(u_l-u_r)^2+\Big(\frac{2\epsilon (\rho_r-\rho_l)}{\rho_l+\rho_r}\Big)(u_l-u_r)-2\epsilon\frac{(\rho_l-\rho_r)(f(\rho_l)-f(\rho_r))}{\rho_l+\rho_r}=0
\label{q2.2}
\end{equation}
Solving the above equation \eqref{q2.2},the admissible part of the shock curves passing through $(u_l, \rho_l)$ are computed as
\begin{equation*}
s_1=\big\{(u,\rho):(u-u_l)=\frac{\rho-\rho_l}{\rho+\rho_l}\Big[\epsilon-\sqrt{\epsilon^2+\frac{2\epsilon(\rho+\rho_l)(f(\rho)-f(\rho_l))}{(\rho-\rho_l)}}\Big],\,\,\, \rho > \rho_l;\,\,\,\,u<u_l\big\},
\end{equation*}
\begin{equation*}
s_2=\big\{(u,\rho):(u-u_l)=\frac{\rho-\rho_l}{\rho+\rho_l}\Big[\epsilon+\sqrt{\epsilon^2+\frac{2\epsilon(\rho+\rho_l)(f(\rho)-f(\rho_l))}{(\rho-\rho_l)}}\Big],\,\,\, \rho < \rho_l;\,\,\,\,u<u_l\big\}.
\end{equation*}\\
\textbf{Rarefaction curves}: The Rarefaction curves $R_1$, $R_2$ passing through $(u_l,\rho_l)$ are the following :\\
\textit{1- Rarefaction curve}: The first Rarefaction curve passing through $(u_l,\rho_l)$ is derived by solving
\begin{equation*}
\frac{du}{d\rho}=\frac{\epsilon-\sqrt{4\epsilon \rho f^{\prime}(\rho)+\epsilon^2}}{2\rho},\,\,\,\,\,\,\,\,\,\,\, u(\rho_l)=u_l;
\end{equation*}
\begin{equation*}
R_1=\big\{(u,\rho):u-u_l=\int_{\rho_l}^{\rho} \frac{\epsilon-\sqrt{4\epsilon \xi f^{\prime}(\xi)+\epsilon^2}}{2\xi},\,\,\, \rho < \rho_l\big\}.
\end{equation*}

\textit{2- Rarefaction curve}: The second Rarefaction curve $R_2$ passing through $(u_l,\rho_l)$ is derived by solving
\begin{equation*}
\frac{du}{d\rho}=\frac{\epsilon+\sqrt{4\epsilon \rho f^{\prime}(\rho)+\epsilon^2}}{2\rho},\,\,\,\,\,\,\,\,\,\,\, u(\rho_l)=u_l;
\end{equation*}
\begin{equation*}
R_2=\big\{(u,\rho):u-u_l=\int_{\rho_l}^{\rho}\frac{\epsilon+\sqrt{4\epsilon \xi f^{\prime}(\xi)+\epsilon^2}}{2\xi} ,\,\,\, \rho > \rho_l\big\}.
\end{equation*}

To solve the equation \eqref{sc1.6} with \eqref{e1.4}, three cases are required to be considered, that is (I)   $u_l>u_r$, (II) $u_l=u_r$ and   (III) $u_l<u_r$. In case (I) for sufficiently small $\epsilon(>0)$, we have solutions as a combination of two shock wave.  For case (II) solutions  are given as the combination of 1-rarefaction and 2-shock curves or 1-shock and 2-rarefaction curves depending upon $\rho_l>\rho_r$ or $\rho_l<\rho_r$ respectively. Finally in case (III) for sufficiently small $\epsilon(>0)$ , the solution consists of two rarefaction waves and vacuum state. We obtain the limit for the solutions in each case and it is exactly matches with the vanishing viscosity limit found in \cite{j2} which satisfies the equation in the sense of definition\eqref{defsol}.

\section{Formation of concentration for $u_l >u_r$}
In this section the limiting behavior for the solution of the equations \eqref{sc1.6}-\eqref{e1.4} for $u_l> u_r$ as $\epsilon$ tends to zero has been studied. We find solution for the system \eqref{sc1.6} satisfying Lax- entropy condition for the case $u_l >u_r$. The first step towards this is to show the existence of the intermediate state. Note that $\rho_l$ and $\rho_r$ are taken positive through out this section.
\begin{theorem}(Existence of an intermediate state).\\
If $u_l > u_r$, there exists an $\eta>0$ such that for any $\epsilon < \eta$, we have a unique intermediate state $(u^ *_{\epsilon}, \rho^ *_{\epsilon})$  which connects $( u_l, \rho_l)$  to  $(u^ *_{\epsilon}, \rho^ *_{\epsilon})$  by 1-shock and  $(u^ *_{\epsilon}, \rho^ *_{\epsilon})$  to $( u_r, \rho_r)$ by 2-shock which satisfies Lax-entropy condition.
\end{theorem}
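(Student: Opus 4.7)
The strategy is classical Lax-theory: view the forward $1$-shock curve through $(u_l,\rho_l)$ and the backward $2$-shock curve through $(u_r,\rho_r)$ as monotone graphs in the $(\rho,u)$-plane, show they cross at exactly one point with $\rho^*_\epsilon>\max(\rho_l,\rho_r)$ once $\epsilon$ is small, and then verify the Lax inequalities at that crossing. From the formula for $s_1$ above, the forward $1$-shock curve is the graph of
\begin{equation*}
U_1(\rho;\epsilon):= u_l+\frac{\rho-\rho_l}{\rho+\rho_l}\!\left[\epsilon-\sqrt{\epsilon^2+\tfrac{2\epsilon(\rho+\rho_l)(f(\rho)-f(\rho_l))}{\rho-\rho_l}}\right],\qquad \rho\ge \rho_l,
\end{equation*}
while using the $s_2$-formula with $(u^*_\epsilon,\rho^*_\epsilon)$ in the role of the left state shows that the locus of states that can be joined to $(u_r,\rho_r)$ by an admissible $2$-shock is, after inversion in $\rho$, the graph of a function $U_2(\rho;\epsilon)$ on $\rho\ge\rho_r$ with $U_2(\rho_r;\epsilon)=u_r$. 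A direct differentiation using $f'>0$, $f''>0$ gives $\partial_\rho U_1<0$ and $\partial_\rho U_2>0$, and the square-root terms force $U_1(\rho;\epsilon)\to-\infty$ and $U_2(\rho;\epsilon)\to+\infty$ as $\rho\to\infty$.

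Set $M:=\max(\rho_l,\rho_r)$ and consider $\Phi_\epsilon(\rho):=U_1(\rho;\epsilon)-U_2(\rho;\epsilon)$ on $[M,\infty)$. This $\Phi_\epsilon$ is strictly decreasing and tends to $-\infty$; for a unique crossing it therefore suffices to show $\Phi_\epsilon(M)>0$. On any fixed compact $\rho$-interval the quadratic \eqref{q2.2} collapses to $u=u_l$ (for $U_1$) and to $u=u_r$ (for $U_2$) as $\epsilon\downarrow 0$, so $\Phi_\epsilon(M)\to u_l-u_r>0$. This produces the required $\eta>0$ such that $\Phi_\epsilon(M)>0$ for every $\epsilon<\eta$, and the intermediate value theorem then gives a unique $\rho^*_\epsilon>M$ with $\Phi_\epsilon(\rho^*_\epsilon)=0$; we set $u^*_\epsilon:=U_1(\rho^*_\epsilon;\epsilon)=U_2(\rho^*_\epsilon;\epsilon)$.

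Finally, the Lax inequalities
\begin{equation*}
\lambda_1(u_l,\rho_l)>\sigma_1>\lambda_1(u^*_\epsilon,\rho^*_\epsilon),\qquad
\lambda_2(u^*_\epsilon,\rho^*_\epsilon)>\sigma_2>\lambda_2(u_r,\rho_r),
\end{equation*}
with $\sigma_1,\sigma_2$ the Rankine--Hugoniot speeds from \eqref{2.1}, follow by substituting the explicit eigenvalue formulas and using the admissibility constraints $\rho^*_\epsilon>\rho_l$, $\rho^*_\epsilon>\rho_r$ built into the definitions of $s_1$ and $s_2$, together with the genuine nonlinearity verified above. The step I expect to be the main obstacle is the quantitative control needed to exhibit $\eta$: the curves $U_1$ and $U_2$ become extremely flat in $u$ as $\epsilon\downarrow 0$, so one must watch the dependence of $\Phi_\epsilon(M)$ on the gap $u_l-u_r$ and on $|\rho_l-\rho_r|$. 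The large-$\rho$ asymptotics $U_1(\rho;\epsilon)-u_l\sim-\sqrt{2\epsilon f(\rho)}$ (and the symmetric relation for $U_2$) pin down $f(\rho^*_\epsilon)\sim (u_l-u_r)^2/(8\epsilon)$, so $\rho^*_\epsilon\to\infty$ as $\epsilon\to 0$; this divergence is what will drive the distributional concentration analysed in the rest of the section.
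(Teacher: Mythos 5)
Your construction of the intersection point is essentially the paper's argument in dual form: the paper parameterizes the admissible $1$-shock curve through $(u_l,\rho_l)$ and the backward admissible $2$-shock curve through $(u_r,\rho_r)$ as monotone functions $\rho^{*}_1(u)$, $\rho^{*}_2(u)$ of $u$ and applies the intermediate value theorem to $\rho^{*}_1-\rho^{*}_2$ on $[u_r,u_l]$, with the smallness of $\epsilon$ used to force $\rho^{*}_1(u_r)>\rho_r$ and $\rho^{*}_2(u_l)>\rho_l$ (this is exactly your condition $\Phi_\epsilon(M)>0$ read in the other variable). Your monotonicity claims $\partial_\rho U_1<0$, $\partial_\rho U_2>0$, the divergence $U_1\to-\infty$, $U_2\to+\infty$, and the closing asymptotics $2\epsilon f(\rho^*_\epsilon)\to\frac14(u_l-u_r)^2$ all correspond to facts the paper establishes, so the existence and uniqueness of $(u^*_\epsilon,\rho^*_\epsilon)$ is handled correctly.

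The genuine gap is the Lax admissibility verification, which you dispatch in one sentence but which occupies roughly half of the paper's proof. Appealing to ``the genuine nonlinearity verified above'' yields the Lax inequalities only for shocks of small amplitude; here $\rho^*_\epsilon\to\infty$ as $\epsilon\to 0$, so both shocks have arbitrarily large amplitude and the inequalities must be checked along the entire branch (the paper emphasizes precisely that its result does not rely on closeness of the data). Concretely, $\sigma_1<\lambda_1(u_l,\rho_l)$ and $\sigma_1>\lambda_1(u^*_\epsilon,\rho^*_\epsilon)$ do not follow from the constraints $\rho^*_\epsilon>\rho_l$, $u^*_\epsilon<u_l$ alone: the paper proves them by showing that the auxiliary functions
\begin{equation*}
G(\rho)=\frac{\rho}{\rho+\bar\rho}\Big[\epsilon-\sqrt{\epsilon^2+\tfrac{2\epsilon(\rho+\bar\rho)(f(\rho)-f(\bar\rho))}{\rho-\bar\rho}}\Big]-\frac{\epsilon}{2},
\qquad
H(\bar\rho)=\frac{\bar\rho}{\rho+\bar\rho}\Big[\epsilon-\sqrt{\epsilon^2+\tfrac{2\epsilon(\rho+\bar\rho)(f(\rho)-f(\bar\rho))}{\rho-\bar\rho}}\Big]-\frac{\epsilon}{2}
\end{equation*}
are decreasing in their arguments, which in turn rests on the convexity inequality $(\rho^2-\bar\rho^2)f'(\rho)>2\bar\rho\,(f(\rho)-f(\bar\rho))$ obtained from the mean value theorem and $f''>0$. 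You should also record the two cross-family inequalities $\sigma_1<\lambda_2(u^*_\epsilon,\rho^*_\epsilon)$ and $\sigma_2>\lambda_1(u^*_\epsilon,\rho^*_\epsilon)$, which are part of the Lax condition for a $2\times 2$ system and are missing from your display (these, at least, follow immediately from sign considerations). Everything else in your proposal is sound.
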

\begin{proof}
The admissible 1-shock curve passing through $(\bar{u}, \bar{\rho})$ satisfies the following:

\begin{equation}
\begin{aligned}
(u-\bar{u})s_1=&\frac{1}{2}(u^2-\bar{u}^2)+\epsilon(f(\rho)-f(\bar{\rho})),\\
(\rho-\bar{\rho})s_1=&(\rho u-\bar{\rho}\bar{u})+\epsilon(\bar{\rho}-\rho),
 \label{e2.1}
\end{aligned}
\end{equation}
and satisfies the Lax entropy inequality
\begin{equation}
 s_1 < \lambda_1 (\bar{u}, \bar{\rho}), \,\,   \lambda_1 (u, \rho) < s_1 < \lambda_2 (u, \rho).
\label{ee2.2}
\end{equation}

Eliminating $s_1$  from \eqref{e2.1} and simplifying as in Section $2$, we have
\begin{equation}
(u-\bar{u})=\frac{\rho-\bar{\rho}}{\rho+\bar{\rho}}\Big[\epsilon-\sqrt{\epsilon^2+\frac{2\epsilon(\rho+\bar{\rho})(f(\rho)-f(\bar{\rho}))}{(\rho-\bar{\rho})}}\Big].
\label{e2.3}
\end{equation}
We show that for a given $u< \bar{u}$, there exists a unique $\rho > \bar{\rho}$ such that equation \eqref{e2.3} holds. For that let us define a function
\begin{equation}
\begin{aligned}
F(\rho)&=\frac{\rho-\bar{\rho}}{\rho+\bar{\rho}}\Big[\epsilon-\sqrt{\epsilon^2+\frac{2\epsilon(\rho+\bar{\rho})(f(\rho)-f(\bar{\rho}))}{(\rho-\bar{\rho})}}\Big]\\
&=\frac{1-\frac{\bar{\rho}}{\rho}}{1+\frac{\bar{\rho}}{\rho}}\Big[\epsilon-\sqrt{\epsilon^2+\frac{2\epsilon(\rho+\bar{\rho})(\frac{f(\rho)}{\rho}-\frac{f(\bar{\rho})}{\rho})}{(1-\frac{\bar{\rho}}{\rho})}}\Big]
\label{e2.4}
\end{aligned}
\end{equation}
As $\displaystyle{\lim_{\rho\to \infty}}\frac{f(\rho)}{\rho}=\displaystyle{\lim_{\rho\to \infty}} f^\prime (\rho)\leq \infty$, we have$\displaystyle{\lim_{\rho\to \infty}} F(\rho)=-\infty$.
Since $F(\bar{\rho})=0$, we have $F([\bar{\rho},\infty))=(-\infty,0]$.  Hence the equation\eqref{e2.3} is solvable for any given $u\in (-\infty,\bar{u}]$. To prove the uniqueness of $\rho$ in the interval $[\bar{\rho},\infty)$, observe that $F(\rho)$ satisfies the following equation:
\begin{equation*}
F(\rho)^2-\frac{2\epsilon (\rho-\bar{\rho})}{(\rho+\bar{\rho)}}F(\rho)-\frac{2\epsilon (\rho-\bar{\rho})(f(\rho)-f(\bar{\rho}))}{(\rho+\bar{\rho})}=0.
\end{equation*}
Differentiating the above equation, we have
\begin{equation}
F^{\prime}(\rho)\Big[F(\rho)-\frac{2\epsilon (\rho-\bar{\rho})}{(\rho+\bar{\rho})}\Big]=\frac{4\epsilon \bar{\rho}(f(\rho)+f(\bar{\rho}))+2\epsilon(\rho-\bar{\rho})f^{\prime}(\rho)+4\epsilon \bar{\rho}F(\rho)}{(\rho+\bar{\rho})^2}
\end{equation}
Since $F(\rho)-\frac{2\epsilon (\rho-\bar{\rho})}{(\rho+\bar{\rho})}<0$ and $4\epsilon \bar{\rho}(f(\rho)+f(\bar{\rho}))+2\epsilon(\rho-\bar{\rho})f^{\prime}(\rho)+4\epsilon \bar{\rho}F(\rho)>0$, from the above equation we conclude that 
$F(\rho)$ is decreasing in $[\bar{\rho}, \infty)$. This shows the uniqueness of $\rho$.\\
The conditions \eqref{e2.1} and \eqref{ee2.2} hold if and only if $u < \bar{u}$ and $\rho >\bar{\rho}$. In fact, $s_1$ satisfies \eqref{ee2.2} if
\begin{equation}
\begin{aligned}
\frac{\rho u -\bar{\rho}\bar{u}}{\rho -\bar{\rho}}-\epsilon < \bar{u} -\frac{\epsilon}{2}-\frac{1}{2}\sqrt{4\epsilon \bar{\rho}f^{\prime}(\bar{\rho})+\epsilon^2} \,\,\,\, ,\\
u-\frac{\epsilon}{2} -\frac{1}{2}\sqrt{4\epsilon \rho f^{\prime}(\rho)+\epsilon^2} <\frac{\rho u -\bar{\rho}\bar{u}}{\rho -\bar{\rho}}-\epsilon
<u-\frac{\epsilon}{2} +\frac{1}{2}\sqrt{4\epsilon \rho f^{\prime}(\rho)+\epsilon^2}. \\
\label{e2.6}
\end{aligned}
\end{equation}
Now from the first inequality of \eqref{e2.6} one can get,
\begin{equation*}
\frac{\rho(u-\bar{u})}{(\rho-\bar{\rho})}-\frac{\epsilon}{2}<-\frac{1}{2}\sqrt{4\epsilon \bar{\rho}f^{\prime}(\bar{\rho})+\epsilon^2}.
\end{equation*}
Using the equation \eqref{e2.3} the above inequality can be rephrased as
\begin{equation}
\frac{\rho}{\rho+\bar{\rho}}\Big[\epsilon-\sqrt{\epsilon^2+\frac{2\epsilon(\rho+\bar{\rho})(f(\rho)-f(\bar{\rho}))}{(\rho-\bar{\rho})}}\Big]-\frac{\epsilon}{2}<-\frac{1}{2}\sqrt{4\epsilon \bar{\rho}f^{\prime}(\bar{\rho})+\epsilon^2}.
\label{e3.7}
\end{equation}
To prove the above inequality \eqref{e3.7}, we consider
\begin{equation}
\begin{aligned}
G(\rho)&=\frac{\rho}{\rho+\bar{\rho}}\Big[\epsilon-\sqrt{\epsilon^2+\frac{2\epsilon(\rho+\bar{\rho})(f(\rho)-f(\bar{\rho}))}{(\rho-\bar{\rho})}}\Big]-\frac{\epsilon}{2}.
\end{aligned}
\label{ee3.8}
\end{equation}
Now we claim that the above function $G(\rho)$ is decreasing. Assuming that the claim is true, let us complete the proof of the inequality \eqref{e3.7}. Since $G(\rho)$ is decreasing and $\rho>\bar{\rho}$, we have $G(\rho)<G(\bar{\rho})$. Note that, employing mean value theorem on $f(\rho)$, \eqref{ee3.8} can be written as
\begin{equation*}
G(\rho)=\frac{\rho}{\rho+\bar{\rho}}\Big[\epsilon-\sqrt{\epsilon^2+2\epsilon(\rho+\bar{\rho})f^{\prime}(\xi_{\rho})}\Big]-\frac{\epsilon}{2},\,\,\,\,\xi_{\rho}\in [\bar{\rho},\rho].
\end{equation*}
Therefore, $$G(\bar{\rho})=-\frac{1}{2}\sqrt{\epsilon^2+4\epsilon\bar{\rho}f^{\prime}(\xi_{\bar{\rho}})}$$
As $G(\rho)$ is decreasing and $\rho>\bar{\rho}$, we have
\begin{equation*}
G(\rho)<-\frac{1}{2}\sqrt{\epsilon^2+4\epsilon\bar{\rho}f^{\prime}(\xi_{\bar{\rho}})}
\end{equation*}
So it is enough to show that
$$-\frac{1}{2}\sqrt{\epsilon^2+4\epsilon\bar{\rho}f^{\prime}(\xi_{\bar{\rho}})}<-\frac{1}{2}\sqrt{\epsilon^2+4\epsilon \bar{\rho}f^{\prime}(\bar{\rho})} $$
This is evident since $f^{\prime}(\rho)$ is increasing. Now we show that $G(\rho)$ is a decreasing function.
Differentiating \eqref{ee3.8} one can get
\begin{equation}
\begin{aligned}
G^{\prime}(\rho)&=\frac{-\epsilon \rho \frac{d}{d\rho}\Big[\frac{(\rho+\bar{\rho})(f(\rho)-f(\bar{\rho}))}{(\rho-\bar{\rho})}\Big]}{(\rho+\bar{\rho})\sqrt{\epsilon^2+\frac{2\epsilon(\rho+\bar{\rho})(f(\rho)-f(\bar{\rho}))}{(\rho-\bar{\rho})}}}+\frac{\bar{\rho}\Big[\epsilon-\sqrt{\epsilon^2+\frac{2\epsilon(\rho+\bar{\rho})(f(\rho)-f(\bar{\rho}))}{(\rho-\bar{\rho})}}\Big]}{(\rho+\bar{\rho})^2}.
\end{aligned}
\label{dif3.10}
\end{equation}
Now let us analyze the numerator of the first term of \eqref{dif3.10}. Consider,
\begin{equation}
\begin{aligned}
\frac{d}{d\rho}\Big[\frac{(\rho+\bar{\rho})(f(\rho)-f(\bar{\rho}))}{(\rho-\bar{\rho})}\Big]\\
=\frac{(\rho^2-\bar{\rho}^2)f^{\prime}(\rho)-2\bar{\rho}(f(\rho)-f(\bar{\rho}))}{(\rho-\bar{\rho})^2}.
\end{aligned}
\label{dif3.11}
\end{equation}
Since $f^{\prime}(\rho)$ is increasing, a use of mean value theorem on $f(\rho)$ in the interval $[\bar{\rho},\rho]$ shows that $(\rho^2-\bar{\rho}^2)f^{\prime}(\rho)-2\bar{\rho}(f(\rho)-f(\bar{\rho}))>0$. So from \eqref{dif3.10} we conclude that $G^{\prime}(\rho)<0$, i.e, $G(\rho)$ is decreasing. This proves our claim.\\
Now the second inequality of \eqref{e2.6} can be rewritten as
\begin{equation}
\begin{aligned}
&\frac{\bar{\rho}(u-\bar{u})}{(\rho-\bar{\rho})}-\frac{\epsilon}{2} < \frac{1}{2}\sqrt{\epsilon^2+4\epsilon \rho f^{\prime}(\rho)},\\
&\frac{\bar{\rho}(u-\bar{u})}{(\rho-\bar{\rho})}-\frac{\epsilon}{2}>-\frac{1}{2}\sqrt{\epsilon^2+4\epsilon \rho f^{\prime}(\rho)}.
\end{aligned}
\label{inq3.12}
\end{equation}
As $\rho>\bar{\rho}$, $u<\bar{u}$, the first inequality of \eqref{inq3.12} is evident. Again using the equation \eqref{e2.3}, the second inequality of \eqref{inq3.12} can be written as
\begin{equation*}
\frac{\bar{\rho}}{\rho+\bar{\rho}}\Big[\epsilon-\sqrt{\epsilon^2+\frac{2\epsilon(\rho+\bar{\rho})(f(\rho)-f(\bar{\rho}))}{(\rho-\bar{\rho})}}\Big]-\frac{\epsilon}{2}>-\frac{1}{2}\sqrt{\epsilon^2+4\epsilon \rho f^{\prime}(\rho)}.
\end{equation*}
To prove the above inequality, we consider,
\begin{equation}
\begin{aligned}
H(\bar{\rho})&=\frac{\bar{\rho}}{\rho+\bar{\rho}}\Big[\epsilon-\sqrt{\epsilon^2+\frac{2\epsilon(\rho+\bar{\rho})(f(\rho)-f(\bar{\rho}))}{(\rho-\bar{\rho})}}\Big]-\frac{\epsilon}{2}.
\end{aligned}
\end{equation}
In a similar way as above we can show that $H(\bar{\rho})$ is a decreasing function of $\bar{\rho}$ and since $\bar{\rho}<\rho$, we have $H(\bar{\rho})>H(\rho)$. Now following the similar steps as above one gets the second inequality of \eqref{inq3.12}. Note that the above inequality is independent of $\epsilon$ and holds for any $(u,\rho)$ and $(\bar{u},\bar{\rho})$ satisfying the condition $u<\bar{u}$ and $\rho>\bar{\rho}$.\\
Therefore,  the branch of the curve satisfying \eqref{e2.1} and \eqref{ee2.2} can be parameterized by a $C^1$ function $\rho_1:(-\infty, \bar{u}] \rightarrow [\bar{\rho}, \infty)$
with the parameter $u$.

 From the equation \eqref{e2.3}, $\rho_1(u)$ satisfies 

\begin{equation}
(u-\bar{u})= \frac{\rho_1(u)-\bar{\rho}}{\rho_1(u)+\bar{\rho}}\Big[\epsilon-\sqrt{\epsilon^2+\frac{2\epsilon(\rho_1(u)+\bar{\rho})(f(\rho_1(u))-f(\bar{\rho}))}{(\rho_1(u)-\bar{\rho})}}\Big].
\label{e2.10}
\end{equation}
Differentiating the above equation \eqref{e2.10} with respect to the parameter $u$, we have
\begin{equation}
\begin{aligned}
1&=\Big[\frac{\rho_1(u)-\bar{\rho}}{\rho_1(u)+\bar{\rho}}\frac{-\epsilon \frac{d}{d\rho}\Big[\frac{(\rho_1(u)+\bar{\rho})(f(\rho_1(u))-f(\bar{\rho}))}{(\rho_1(u)-\bar{\rho})}\Big]}{\sqrt{\epsilon^2+\frac{2\epsilon(\rho_1(u)+\bar{\rho})(f(\rho_1(u))-f(\bar{\rho}))}{(\rho_1(u)-\bar{\rho})}}}\\
&+\frac{2\bar{\rho}}{(\rho_1(u)+\bar{\rho})^2}\Big(\epsilon-\sqrt{\epsilon^2+\frac{2\epsilon(\rho_1(u)+\bar{\rho})(f(\rho_1(u))-f(\bar{\rho}))}{(\rho_1(u)-\bar{\rho})}}\Big)\Big]\rho^{\prime}_1(u).
\end{aligned}
\label{shock1}
\end{equation}
Since $\rho_1(u)>\bar{\rho}$ and $f^{\prime}(.)$ is increasing, from \eqref{dif3.11} we have
$$\frac{d}{d\rho}\Big[\frac{(\rho_1(u)+\bar{\rho})(f(\rho_1(u))-f(\bar{\rho}))}{(\rho_1(u)-\bar{\rho})}\Big]>0. $$
Now since $\rho_1(u)>\bar{\rho}$, the first term in the right hand side of \eqref{shock1} is negative and the second term is also negative.
Therefore we conclude that $\rho^{\prime}_1(u)<0$.\\
Similarly the branch of the curve satisfying 
\begin{equation*} 
s_1 >\lambda_2 ({u},  {\rho}), \,\,   \lambda_1 (\bar{u}, \bar{\rho}) < s_1 < \lambda_2 (\bar{u}, \bar{\rho}),
\end{equation*} 
is the admissible 2-shock curve which can be parameterized by a $C^1$ function $\rho_2:(-\infty, \bar{u}] \rightarrow (-\infty, \bar{\rho}]$
with the parameter $u$. 

Also $\rho_2$ satisfies the following equation: 
\begin{equation}
(u-\bar{u})= \frac{\rho_2(u)-\bar{\rho}}{\rho_2(u)+\bar{\rho}}\Big[\epsilon+\sqrt{\epsilon^2+\frac{2\epsilon(\rho_2(u)+\bar{\rho})(f(\rho_2(u))-f(\bar{\rho}))}{(\rho_2(u)-\bar{\rho})}}\Big].
\label{e2.9}
\end{equation}
Differentiating the above equation \eqref{e2.9} we have,
\begin{equation}
\begin{aligned}
1&=\Big[\frac{1}{(\rho_2(u)+\bar{\rho})}\Big\{\frac{\epsilon (\rho_2(u)-\bar{\rho})\frac{d}{d\rho}\Big[\frac{(\rho_2(u)+\bar{\rho})(f(\rho_2(u))-f(\bar{\rho}))}{(\rho_2(u)-\bar{\rho})}\Big]}{\sqrt{\epsilon^2+\frac{2\epsilon(\rho_2(u)+\bar{\rho})(f(\rho_2(u))-f(\bar{\rho}))}{(\rho_2(u)-\bar{\rho})}}}\\
&+\Big(\epsilon+\sqrt{\epsilon^2+\frac{2\epsilon(\rho_2(u)+\bar{\rho})(f(\rho_2(u))-f(\bar{\rho}))}{(\rho_2(u)-\bar{\rho})}}\Big)\Big\}\\
&-\frac{(\rho_2(u)-\bar{\rho})\Big[\epsilon+\sqrt{\epsilon^2+\frac{2\epsilon(\rho_2(u)+\bar{\rho})(f(\rho_2(u))-f(\bar{\rho}))}{(\rho_2(u)-\bar{\rho})}}\Big]}{(\rho_2(u)+\bar{\rho})^2}\Big]\rho^{\prime}_2(u).
\end{aligned}
\label{shock2}
\end{equation}
Note that, since $\rho_2(u)<\bar{\rho}$ the second term of the above equation \eqref{shock2} is positive. Now we determine the sign of the first  term. To determine the sign, we calculate
\begin{equation}
\begin{aligned}
&\frac{\epsilon (\rho_2(u)-\bar{\rho})\frac{d}{d\rho}\Big[\frac{(\rho_2(u)+\bar{\rho})(f(\rho_2(u))-f(\bar{\rho}))}{(\rho_2(u)-\bar{\rho})}\Big]}{\sqrt{\epsilon^2+\frac{2\epsilon(\rho_2(u)+\bar{\rho})(f(\rho_2(u))-f(\bar{\rho}))}{(\rho_2(u)-\bar{\rho})}}}
+\Big(\epsilon+\sqrt{\epsilon^2+\frac{2\epsilon(\rho_2(u)+\bar{\rho})(f(\rho_2(u))-f(\bar{\rho}))}{(\rho_2(u)-\bar{\rho})}}\Big)\\
&=\frac{\epsilon (\rho_2(u)-\bar{\rho})\frac{d}{d\rho}\Big[\frac{(\rho_2(u)+\bar{\rho})(f(\rho_2(u))-f(\bar{\rho}))}{(\rho_2(u)-\bar{\rho})}\Big]+\frac{2\epsilon(\rho_2(u)+\bar{\rho})(f(\rho_2(u))-f(\bar{\rho}))}{(\rho_2(u)-\bar{\rho})}}{\sqrt{\epsilon^2+\frac{2\epsilon(\rho_2(u)+\bar{\rho})(f(\rho_2(u))-f(\bar{\rho}))}{(\rho_2(u)-\bar{\rho})}}}\\
&+\frac{\epsilon\Big[\epsilon+\sqrt{\epsilon^2+\frac{2\epsilon(\rho_2(u)+\bar{\rho})(f(\rho_2(u))-f(\bar{\rho}))}{(\rho_2(u)-\bar{\rho})}}\Big] }{\sqrt{\epsilon^2+\frac{2\epsilon(\rho_2(u)+\bar{\rho})(f(\rho_2(u))-f(\bar{\rho}))}{(\rho_2(u)-\bar{\rho})}}}.
\end{aligned}
\label{diff3.16}
\end{equation}
Now observe that, in the view of \eqref{dif3.11} and employing mean value theorem on $f(.)$ in the interval $[\rho_2(u),\bar{\rho}]$, the the numerator of the first term of the above equation, i.e
$$\epsilon (\rho_2(u)-\bar{\rho})\frac{d}{d\rho}\Big[\frac{(\rho_2(u)+\bar{\rho})(f(\rho_2(u))-f(\bar{\rho}))}{(\rho_2(u)-\bar{\rho})}\Big]+\frac{2\epsilon(\rho_2(u)+\bar{\rho})(f(\rho_2(u))-f(\bar{\rho}))}{(\rho_2(u)-\bar{\rho})}>0.$$
To show the above inequality we also used the fact that $f(.)$ is increasing. Therefore from \eqref{shock2},  we conclude that $\rho^{\prime}_2(u)>0$.\\
 Now consider the branch of the curve passing through $(u_r, \rho_r)$ satisfying the condition $u>u_r,\,\,\, \rho > \rho_r$. In a similar way as above it can be parameterized by a $C^1$- curve
$\rho^{*}_2:[u_r, \infty)\to [\rho_r, \infty)$. Then for any given point $(\alpha, \beta)$, the part of the curve $\rho^{*}_2$ connecting $(\alpha, \beta)$ to $(u_r,\rho_r)$ will be the admissible 2-shock curve.
Let us denote the admissible 1-shock curve passing through $(u_l, \rho_l)$ as $\rho^{*}_1 $. From the previous analysis, this is parameterized by a $C^1$ curve  $\rho^{*}_1 :  (- \infty, u_l] \rightarrow [\rho_l, \infty)$. Then
$\rho^{*}_1(u_r)$  satisfies \eqref{e2.10} with $\rho_1(u)$  and ${u}$ replaced by $\rho^{*}_1 (u_r)$ and $u_r$ respectively, and $\bar{u}$, $\bar{\rho}$  replaced by $u_l$ and $\rho_l$ respectively, i.e.,
\begin{equation}
(u_r-u_l)= \frac{\rho^*_1(u_r)-\rho_l}{\rho^*_1(u_r)+\rho_l}\Big[\epsilon-\sqrt{\epsilon^2+\frac{2\epsilon(\rho^*_1(u_r)+\rho_l)(f(\rho^*_1(u_r))-f(\rho_l))}{(\rho^*_1(u_r)-\rho_l)}}\Big].
\label{ee2.10}
\end{equation}
 Again $\rho^{*}_2 (u_l)$  satisfies  \eqref{e2.9} with 
$\rho_2(u)$ and ${u}$ replaced by $\rho^*_2(u_l)$ and $u_l$ respectively, and $\bar{u},\bar{\rho}$  replaced by  $u_r$ and $\rho_r$ respectively, i.e.,
\begin{equation}
(u_l-u_r)= \frac{\rho^*_2(u_l)-\rho_r}{\rho^*_2(u_l)+\rho_r}\Big[\epsilon+\sqrt{\epsilon^2+\frac{2\epsilon(\rho^*_2(u_l)+\rho_r)(f(\rho^*_2(u_l))-f(\rho_r))}{(\rho^*_2(u_l)-\rho_r)}}\Big].
\label{ee2.9}
\end{equation}
 It is evident from \eqref{ee2.10} and \eqref{ee2.9} that $\rho^{*}_1(u_r)$ and  $\rho^{*}_2 (u_l)$  tend to $\infty$ as $\epsilon$ tends to zero. Suppose $\rho^{*}_1(u_r)$ and $\rho^{*}_2 (u_l)$ are finite up to a subsequence as $\epsilon$ tends to zero, then \eqref{ee2.10} and \eqref{ee2.9} implies $u_l=u_r$, which is not the case.   Therefore there exists an $\eta >0$  such that for any $\epsilon< \eta, $  one has  $ \rho^{*}_2 (u_l)>\rho_l$ and  $ \rho^{*}_1 (u_r)>\rho_r$. Now let us consider the function $\rho^{*}_1- \rho^{*}_2:[u_r, u_l]\to \mathbb{R}$. Since $ \rho^{*}_1(u_l)-\rho^{*}_2(u_l)=\rho_l -\rho^{*}_2 (u_l) <0$  and  $ \rho^{*}_1(u_r)-\rho^{*}_2 (u_r)= \rho^{*}_1(u_r)-\rho_r >0$,  by intermediate value theorem there exists a point $ u^{*}_{\epsilon}$ such that  $\rho^{*}_1( u^{*}_{\epsilon})=\rho^{*}_2 ( u^{*}_{\epsilon})=\rho^{*}_{\epsilon}$(say). 
The uniqueness of $\rho^{*}_{\epsilon}$ follows from the fact that $\rho^{*}_1$  is strictly decreasing and $\rho^{*}_2$ is strictly increasing. Since we are considering only admissible part of the curves,  Lax entropy condition 
holds. This completes the proof.\\
\end{proof}
The next tusk is to determine the limit of the problem \eqref{sc1.6} for the shock case. First we define $\delta$-distribution and state a Lemma from \cite{sahoo} without proof .\\
\begin{definition}
 A weighted $\delta$-distribution ``$ d(t)\delta_{x=c(t)}$"  concentrated on a smooth curve $x=c(t)$ can be defined by
\begin{equation*}
\langle\, d(t)\delta_{x=c(t)},\varphi(x,t)\rangle=\int_{0}^{\infty}d(t)\varphi(c(t),t)dt
\end{equation*}
for all $\varphi\in C^\infty_c(\mathbb{R}\times (0,\infty))$.
\end{definition}
\begin{lem}
Suppose $a_{\epsilon}(t)(>0)$ and $b_{\epsilon}(t)(>0)$ converge uniformly to $0$ on compact subsets of $(0,\infty)$ as $\epsilon$ tends to zero.  Also assume that $d_{\epsilon}(t)$ converges to $d(t)$  uniformly on compact subsets of $ (0,\infty)$ as $\epsilon$ tends to zero. Then 
\begin{equation*}
\frac{1}{b_{\epsilon}(t)+a_{\epsilon}(t)} d_{\epsilon}(t)
\displaystyle{\chi_{(c(t)-a_{\epsilon}(t), c(t)+b_{\epsilon}(t))}}(x)
\end{equation*}
converges to $d(t)\delta_{x=c(t)}$ in the sense of distribution.
\end{lem}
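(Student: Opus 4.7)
The plan is to test the expression against an arbitrary $\varphi\in C_c^\infty(\mathbb{R}\times(0,\infty))$. Let $K\subset(0,\infty)$ be any compact interval whose product with $\mathbb{R}$ contains $\operatorname{supp}\varphi$; since $\operatorname{supp}\varphi$ is compact and lies strictly inside $\mathbb{R}\times(0,\infty)$, such a $K$ exists and is bounded away from $t=0$. Performing the $x$-integration first, the pairing reduces to
$$I_\epsilon=\int_K \frac{d_\epsilon(t)}{a_\epsilon(t)+b_\epsilon(t)}\int_{c(t)-a_\epsilon(t)}^{c(t)+b_\epsilon(t)}\varphi(x,t)\,dx\,dt.$$

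First I would apply the mean value theorem for integrals to the inner $x$-integral: for each fixed $t\in K$, continuity of $\varphi(\cdot,t)$ yields a point $\xi_\epsilon(t)\in(c(t)-a_\epsilon(t),c(t)+b_\epsilon(t))$ with
$$\int_{c(t)-a_\epsilon(t)}^{c(t)+b_\epsilon(t)}\varphi(x,t)\,dx=(a_\epsilon(t)+b_\epsilon(t))\,\varphi(\xi_\epsilon(t),t).$$
This collapses $I_\epsilon$ to the one-dimensional integral $I_\epsilon=\int_K d_\epsilon(t)\,\varphi(\xi_\epsilon(t),t)\,dt$, and all remaining work takes place on the fixed compact set $K$.

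Next I would pass to the limit by a uniform-convergence argument. Since $a_\epsilon$ and $b_\epsilon$ converge to $0$ uniformly on $K$, one has $|\xi_\epsilon(t)-c(t)|\le\max(a_\epsilon(t),b_\epsilon(t))\to 0$ uniformly in $t\in K$. Combined with the uniform continuity of $\varphi$ on a compact neighborhood of the graph $\{(c(t),t):t\in K\}$, this gives $\varphi(\xi_\epsilon(t),t)\to\varphi(c(t),t)$ uniformly on $K$. The hypothesis that $d_\epsilon\to d$ uniformly on $K$, together with the boundedness of $\varphi$, then yields uniform convergence of the product $d_\epsilon(t)\varphi(\xi_\epsilon(t),t)\to d(t)\varphi(c(t),t)$ on $K$, so integrating over the compact set $K$ gives
$$I_\epsilon\longrightarrow\int_0^\infty d(t)\varphi(c(t),t)\,dt=\langle d(t)\delta_{x=c(t)},\varphi\rangle.$$

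The proof is quite mechanical and I do not anticipate a genuine obstacle; the only mild subtlety is extracting a single compact $K\subset(0,\infty)$ on which the uniform convergences of $a_\epsilon$, $b_\epsilon$, and $d_\epsilon$ can be invoked simultaneously, which is precisely what the hypothesis ``uniform on compact subsets of $(0,\infty)$'' is designed to allow. Observe that the factor $1/(a_\epsilon+b_\epsilon)$ is exactly the one needed so that the total mass under $\chi_{(c(t)-a_\epsilon(t),\,c(t)+b_\epsilon(t))}$ equals $1$ for every $t$, explaining at the heuristic level why the limit is a delta-measure of weight $d(t)$ on the curve $x=c(t)$.
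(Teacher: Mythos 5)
Your proof is correct. Note that the paper does not actually prove this lemma — its ``proof'' is only the citation ``see \cite{sahoo}'' — and your argument (integrate in $x$ first, apply the mean value theorem for integrals to write the inner integral as $(a_\epsilon(t)+b_\epsilon(t))\varphi(\xi_\epsilon(t),t)$, then use the uniform convergence of $a_\epsilon,b_\epsilon,d_\epsilon$ on the compact $t$-support of $\varphi$ together with the uniform continuity of $\varphi$ to pass to the limit) is exactly the standard argument such a reference contains, so there is nothing to object to.
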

\begin{proof}
see\cite{sahoo}
\end{proof}
\begin{theorem}(Limiting behavior as $\epsilon\to0$ )\\
The point wise limit of $u^{\epsilon}$ is $u$ and is given by 
\begin{equation*}
u(x,t)=\begin{cases}
u_l,\,\,\, \textnormal{if}\,\, x< \frac{u_l +u_r}{2}t\\
\frac{u_l+u_r}{2},\,\,\,  \textnormal {if}\,\,\,   x=\frac{u_l+u_r}{2}t\\
u_r,\,\,\, \textnormal{if}\,\,\, x> \frac{u_l +u_r}{2}t.
\end{cases}
\end{equation*}
The distributional limit of $\rho^{\epsilon}$ is $\rho$ and is given by
\begin{equation*}
\rho(x,t)=\begin{cases}
\rho_l,\,\,\, \textnormal{if}\,\,\,\, x< \frac{u_l +u_r}{2}t\\
(u_l-u_r)\frac{\rho_l +\rho_r}{2}t \delta_{x=\frac{u_l +u_r}{2}t},\,\,\, \textnormal{if}\,\,\,\, x= \frac{u_l +u_r}{2}t\\
\rho_r,\,\,\, \textnormal{if}\,\,\,\, x> \frac{u_l +u_r}{2}t.
\end{cases}
\end{equation*}
\label{theorem3.3}
\end{theorem}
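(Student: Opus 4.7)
By Theorem 3.1, for $\epsilon$ small the $\epsilon$-solution is piecewise constant with three sectors: $(u_l,\rho_l)$ for $x<s_1^\epsilon t$, the intermediate state $(u^*_\epsilon,\rho^*_\epsilon)$ for $s_1^\epsilon t<x<s_2^\epsilon t$, and $(u_r,\rho_r)$ for $x>s_2^\epsilon t$, where $s_1^\epsilon,s_2^\epsilon$ are the Lax-admissible shock speeds determined by the Rankine--Hugoniot relations \eqref{e2.1}. The plan is (i) to identify the limits of $u^*_\epsilon,s_1^\epsilon,s_2^\epsilon$ together with the asymptotic order of $\epsilon f(\rho^*_\epsilon)$, (ii) to read off the pointwise limit of $u^\epsilon$, and (iii) to compute the distributional limit of $\rho^\epsilon$ via Lemma 3.2.

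For step (i), the proof of Theorem 3.1 already shows $\rho^*_\epsilon\to\infty$, and the enclosure $u_r<u^*_\epsilon<u_l$ yields a subsequential limit $u^*_\epsilon\to u^*$. Rewriting the $\rho$-components of \eqref{e2.1} as $(s_j^\epsilon+\epsilon)(\rho^*_\epsilon-\rho)=\rho^*_\epsilon u^*_\epsilon-\rho u$ with $(u,\rho)=(u_l,\rho_l)$ for $j=1$ and $(u,\rho)=(u_r,\rho_r)$ for $j=2$, and dividing by $\rho^*_\epsilon$ in numerator and denominator, the condition $\rho^*_\epsilon\to\infty$ forces $s_1^\epsilon,s_2^\epsilon\to u^*$. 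Inserting this into the $u$-components of \eqref{e2.1}, solved in the form $\epsilon f(\rho^*_\epsilon)=\epsilon f(\rho)+(u^*_\epsilon-u)(s_j^\epsilon-(u^*_\epsilon+u)/2)$, and using $\epsilon f(\rho_l),\epsilon f(\rho_r)\to 0$, I obtain
\begin{equation*}
\lim_{\epsilon\to 0}\epsilon f(\rho^*_\epsilon) \;=\; \tfrac{1}{2}(u^*-u_l)^2 \;=\; \tfrac{1}{2}(u^*-u_r)^2,
\end{equation*}
which (since $u_l\neq u_r$) forces $u^*=(u_l+u_r)/2$. Uniqueness of the subsequential limit upgrades this to $u^*_\epsilon,s_1^\epsilon,s_2^\epsilon\to(u_l+u_r)/2$ and $\epsilon f(\rho^*_\epsilon)\to(u_l-u_r)^2/8$. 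Step (ii) is then immediate, since for each $x\neq(u_l+u_r)t/2$ the point $(x,t)$ eventually lies in either the left or right sector.

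For step (iii), the left and right sectors of $\rho^\epsilon$ converge pointwise to $\rho_l$ and $\rho_r$ on the respective half-planes. For the middle sector, subtracting the two $\rho$-Rankine--Hugoniot relations produces the identity
\begin{equation*}
(s_2^\epsilon-s_1^\epsilon)\rho^*_\epsilon \;=\; s_2^\epsilon\rho_r - s_1^\epsilon\rho_l + \epsilon(\rho_r-\rho_l) + \rho_l u_l - \rho_r u_r,
\end{equation*}
which, after inserting step (i), collapses to $(u_l-u_r)(\rho_l+\rho_r)/2$. Taking $c_\epsilon(t)=(s_1^\epsilon+s_2^\epsilon)t/2$, $a_\epsilon(t)=b_\epsilon(t)=(s_2^\epsilon-s_1^\epsilon)t/2$, and $d_\epsilon(t)=(s_2^\epsilon-s_1^\epsilon)\rho^*_\epsilon t$, I rewrite
\begin{equation*}
\rho^{*}_\epsilon\,\chi_{(s_1^\epsilon t,\,s_2^\epsilon t)}(x) \;=\; \frac{d_\epsilon(t)}{a_\epsilon(t)+b_\epsilon(t)}\,\chi_{(c_\epsilon(t)-a_\epsilon(t),\,c_\epsilon(t)+b_\epsilon(t))}(x),
\end{equation*}
and Lemma 3.2 (applied with the moving center $c_\epsilon(t)\to(u_l+u_r)t/2$) identifies the limit as the weighted delta $\tfrac{(u_l-u_r)(\rho_l+\rho_r)}{2}\,t\,\delta_{x=(u_l+u_r)t/2}$.

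The main obstacle is the rigorous execution of step (i), in particular pinning down the asymptotic balance $\epsilon f(\rho^*_\epsilon)\to(u_l-u_r)^2/8$ via the subsequence argument above (since $f$ is general one cannot solve for $\rho^*_\epsilon$ explicitly); the remaining steps are algebraic consequences. A secondary point is that Lemma 3.2 is stated for a fixed center $c(t)$, whereas my choice $c_\epsilon(t)$ is $\epsilon$-dependent; this extension is essentially immediate, but if any issue arises I can bypass the lemma by testing $\rho^*_\epsilon\chi_{(s_1^\epsilon t,s_2^\epsilon t)}$ directly against $\varphi\in C_c^\infty$ using the mean value theorem together with dominated convergence.
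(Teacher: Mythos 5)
Your proposal is correct and follows essentially the same route as the paper: establish $\rho^*_\epsilon\to\infty$, extract $u^*=\tfrac{u_l+u_r}{2}$ together with the balance $\epsilon f(\rho^*_\epsilon)\to\tfrac{1}{8}(u_l-u_r)^2$ from the Rankine--Hugoniot relations, identify the common limit of the two shock speeds, and apply Lemma 3.2 with $d_\epsilon(t)=(s_2^\epsilon-s_1^\epsilon)\rho^*_\epsilon t\to\tfrac{1}{2}(u_l-u_r)(\rho_l+\rho_r)t$. The only differences are cosmetic (you manipulate the shock speeds directly rather than the square-root shock-curve formulas, and center the interval at the $\epsilon$-dependent midpoint, which you correctly flag as a harmless extension of the lemma).
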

\begin{proof}
From the previous Theorem $3.1$, we have $(u^{*}_{\epsilon}, \rho^{*}_{\epsilon})$ satisfies the following equations:
\begin{equation}
\begin{aligned}
(u^*_{\epsilon}-u_l)=\frac{\rho^*_{\epsilon}-\rho_l}{\rho^*_{\epsilon}+\rho_l}\Big[\epsilon-\sqrt{\epsilon^2+\frac{2\epsilon(\rho^*_{\epsilon}+\rho_l)(f(\rho^*_{\epsilon})-f(\rho_l))}{(\rho^*_{\epsilon}-\rho_l)}}\Big],\\
(u^*_{\epsilon}-u_r)=\frac{\rho^*_{\epsilon}-\rho_r}{\rho^*_{\epsilon}+\rho_r}\Big[\epsilon+\sqrt{\epsilon^2+\frac{2\epsilon(\rho^*_{\epsilon}+\rho_r)(f(\rho^*_{\epsilon})-f(\rho_r))}{(\rho^*_{\epsilon}-\rho_r)}}\Big].
\end{aligned}
\label{e2.11}
\end{equation}
We know $u^{*}_{\epsilon} \in (u_r, u_l)$.  So the sequence $u^{*}_{\epsilon}$ is bounded. We claim that \textit{$\rho^{*}_{\epsilon}$ is unbounded as $\epsilon$ tends to zero}. In fact,  if $\rho^*_{\epsilon}$ is bounded, then it has a convergent subsequence still denoted by $\rho^*_{\epsilon}$ and it converges to $\rho^*(\neq \rho_l , \rho_r)$ as $\epsilon$ tends to zero. Then passing to the limit as $\epsilon\to 0$ in the above equation \eqref{e2.11}, we have $u^*=u_l=u_r$. Now suppose $\rho^*=\rho_l$, since $\displaystyle{\lim_{\rho^*_{\epsilon}\to\rho_l}}\frac{f(\rho^*_{\epsilon})-f(\rho_l)}{\rho^*_{\epsilon}-\rho_l}>0$, we have $u^*_{\epsilon}=u_l=u_r$. Similar argument works when $\rho^*=\rho_r$. In all of the cases we get a contradiction.\\
 So for subsequence of $u^{*}_{\epsilon}$ and  $\rho^{*}_{\epsilon}$  still denoted  as $u^{*}_{\epsilon}$  and $\rho^{*}_{\epsilon}$ respectively  
we have that $u^{*}_{\epsilon}$ converges to $u^{*}$  and  $\rho^{*}_{\epsilon}$ tend to $+\infty$ as $\epsilon\to 0$. Passing to the limit  for this subsequence in \eqref{e2.11}, we get
\begin{equation*}
\begin{aligned}
 (u^{*}-u_l)&= -\sqrt{l}\\
 (u^{*}-u_r)&= \sqrt{l},
\end{aligned}
\end{equation*}
where $\displaystyle {\lim_{\epsilon \rightarrow 0}}\,\,\, 2\epsilon\big(f(\rho^*_{\epsilon})-f(\rho_l)\big)=\displaystyle {\lim_{\epsilon \rightarrow 0}}\,\,\, 2\epsilon\big(f(\rho^*_{\epsilon})-f(\rho_r)\big)=l.$
Solving the above two equations one can easily find
\begin{equation}
 u^{*}=\frac{u_l +u_r}{2}\,\, \textnormal{and}\,\, l=\frac{1}{4}(u_l -u_r)^2.
\label{e2.15}
\end{equation}
Now from the above Theorem $3.1$, we see that the  intermediate state $(u^*_{\epsilon}, \rho^*_{\epsilon})$ satisfies the equation \eqref{e2.1}. That is,
\begin{equation}
\begin{aligned}
(u-u^*_{\epsilon})s_{1,\epsilon}=&\frac{1}{2}(u^2-u^{*2}_{\epsilon})+\epsilon(f(\rho)-f(\rho^*_{\epsilon})),\\
(\rho-\rho^{*}_{\epsilon})s_{1,\epsilon}=&(\rho u-\rho^*_{\epsilon}u^*_{\epsilon})-\epsilon(\rho-\rho^*_{\epsilon}),
 \label{e3.16}
\end{aligned}
\end{equation}
where $s_{1,\epsilon}$ is the 1-shock speed.
From the above equation we have,
\begin{equation}
s_{1,\epsilon}=\frac{\rho u-\rho^*_{\epsilon}u^*_{\epsilon}}{\rho-\rho^{*}_{\epsilon}}-\epsilon.
\end{equation}
Now we observe that, using the first equation of \eqref{e2.11}(with $u_l$ replaced by $u$) $s^*_1$ can be rewritten as 
\begin{equation}
\begin{aligned}
s_{1,\epsilon}=&\frac{\rho u-\rho^*_{\epsilon}u^*_{\epsilon}}{\rho-\rho^{*}_{\epsilon}}-\epsilon\\
=&\frac{(u+u^*_{\epsilon})(\rho-\rho^*_{\epsilon})+(u-u^*_{\epsilon})(\rho+\rho^*_{\epsilon})}{2(\rho-\rho^*_{\epsilon})}-\epsilon\\
=&\frac{u+u^*_{\epsilon}}{2}-\frac{\epsilon}{2}-\frac{1}{2}\Big[\epsilon-\frac{(u-u^*_{\epsilon})(\rho+\rho^*_{\epsilon})}{\rho-\rho^*_{\epsilon}}\Big]\\
=&\frac{u+u^*_{\epsilon}}{2}-\frac{\epsilon}{2}-\frac{1}{2}\sqrt{\epsilon^2+\frac{2\epsilon(\rho+\rho^*_{\epsilon})(f(\rho)-f(\rho^*_{\epsilon}))}{(\rho-\rho^*_{\epsilon})}}.
\end{aligned}
\label{shock speed}
\end{equation}
Similarly using the second equation of \eqref{e2.11} $s^*_2$ can be written as
\begin{equation}
s_{2,\epsilon}=\frac{u+u^*_{\epsilon}}{2}-\frac{\epsilon}{2}+\frac{1}{2}\sqrt{\epsilon^2+\frac{2\epsilon(\rho+\rho^*_{\epsilon})(f(\rho)-f(\rho^*_{\epsilon}))}{(\rho-\rho^*_{\epsilon})}}.
\end{equation}
where $s_{2,\epsilon}$ is the 2-shock speed.\\
The solution for $(u^{\epsilon},\rho^{\epsilon })$ is given by
\begin{equation}
 \begin{aligned}
u^{\epsilon}(x,t)=\begin{cases} 
                            u_l  \,\,\,\,\textnormal{ if} \,\,\,\, x<\Big(\frac{u_l+u^*_{\epsilon}}{2}-\frac{\epsilon}{2}-\frac{1}{2}\sqrt{\epsilon^2+\frac{2\epsilon(\rho_l+\rho^*_{\epsilon})(f(\rho_l)-f(\rho^*_{\epsilon}))}{(\rho_l-\rho^*_{\epsilon})}}\Big)t\\
                            u_{\epsilon}^* \,\,\,\, \textnormal{ if}\,\,\,\,  \Big(\frac{u_l+u^*_{\epsilon}}{2}-\frac{\epsilon}{2}-\frac{1}{2}\sqrt{\epsilon^2+\frac{2\epsilon(\rho_l+\rho^*_{\epsilon})(f(\rho_l)-f(\rho^*_{\epsilon}))}{(\rho_l-\rho^*_{\epsilon})}}\Big)t<x\\\,\,\,\,\,\,\,\,\,\,\,\,\,\,\,\, <
\Big(\frac{u_r+u^*_{\epsilon}}{2}-\frac{\epsilon}{2}+\frac{1}{2}\sqrt{\epsilon^2+\frac{2\epsilon(\rho_r+\rho^*_{\epsilon})(f(\rho_r)-f(\rho^*_{\epsilon}))}{(\rho_r-\rho^*_{\epsilon})}}\Big)t\\
                            u_r \,\,\,\, \textnormal{ if} \,\,\,\, x> \Big(\frac{u_r+u^*_{\epsilon}}{2}-\frac{\epsilon}{2}+\frac{1}{2}\sqrt{\epsilon^2+\frac{2\epsilon(\rho_r+\rho^*_{\epsilon})(f(\rho_r)-f(\rho^*_{\epsilon}))}{(\rho_r-\rho^*_{\epsilon})}}\Big)t,
                           \end{cases}
 \end{aligned}
\label{e3.14}
\end{equation}
 \begin{equation}
 \begin{aligned}
\rho^{\epsilon}(x,t)=\begin{cases} 
                            \rho_l  \,\,\,\,\textnormal{ if} \,\,\,\, x<\Big(\frac{u_l+u^*_{\epsilon}}{2}-\frac{\epsilon}{2}-\frac{1}{2}\sqrt{\epsilon^2+\frac{2\epsilon(\rho_l+\rho^*_{\epsilon})(f(\rho_l)-f(\rho^*_{\epsilon}))}{(\rho_l-\rho^*_{\epsilon})}}\Big)t\\
                            \rho_{\epsilon}^* \,\,\,\, \textnormal{ if}\,\,\,\,  \Big(\frac{u_l+u^*_{\epsilon}}{2}-\frac{\epsilon}{2}-\frac{1}{2}\sqrt{\epsilon^2+\frac{2\epsilon(\rho_l+\rho^*_{\epsilon})(f(\rho_l)-f(\rho^*_{\epsilon}))}{(\rho_l-\rho^*_{\epsilon})}}\Big)t<x\\\,\,\,\,\,\,\,\,\,\,\,\,\,\,\,\, <
\Big(\frac{u_r+u^*_{\epsilon}}{2}-\frac{\epsilon}{2}+\frac{1}{2}\sqrt{\epsilon^2+\frac{2\epsilon(\rho_r+\rho^*_{\epsilon})(f(\rho_r)-f(\rho^*_{\epsilon}))}{(\rho_r-\rho^*_{\epsilon})}}\Big)t\\
                            \rho_r \,\,\,\, \textnormal{ if} \,\,\,\, x> \Big(\frac{u_r+u^*_{\epsilon}}{2}-\frac{\epsilon}{2}+\frac{1}{2}\sqrt{\epsilon^2+\frac{2\epsilon(\rho_r+\rho^*_{\epsilon})(f(\rho_r)-f(\rho^*_{\epsilon}))}{(\rho_r-\rho^*_{\epsilon})}}\Big)t,
                           \end{cases}
 \end{aligned}
\label{e3.15}
\end{equation}
As $u^*_{\epsilon}$ converges to $u^*=\frac{u_l+u_r}{2}$ as $\epsilon \rightarrow 0$, we have the limit for $u(x,t)$ as stated in the theorem.\\
From \eqref{e2.15}, one can show that
 $$\displaystyle {\lim_{\epsilon \rightarrow 0}}\Big[\frac{u_l+u^*_{\epsilon}}{2}-\frac{\epsilon}{2}-\frac{1}{2}\sqrt{\epsilon^2+\frac{2\epsilon(\rho_l+\rho^*_{\epsilon})(f(\rho_l)-f(\rho^*_{\epsilon}))}{(\rho_l-\rho^*_{\epsilon})}}\Big]=\frac{u_l +u_r}{2},$$
and $$\displaystyle {\lim_{\epsilon \rightarrow 0}}\Big[\frac{u_r+u^*_{\epsilon}}{2}-\frac{\epsilon}{2}+\frac{1}{2}\sqrt{\epsilon^2+\frac{2\epsilon(\rho_r+\rho^*_{\epsilon})(f(\rho_r)-f(\rho^*_{\epsilon}))}{(\rho_r-\rho^*_{\epsilon})}}\Big]
=\frac{u_l +u_r}{2}.$$

Let us denote

\begin{equation*}
\begin{aligned}
c(t)&= \frac{u_l + u_r}{2}t,\\
a_{\epsilon}(t)&= \Big(\frac{u_l+u^*_{\epsilon}}{2}-\frac{\epsilon}{2}-\frac{1}{2}\sqrt{\epsilon^2+\frac{2\epsilon(\rho_l+\rho^*_{\epsilon})(f(\rho_l)-f(\rho^*_{\epsilon}))}{(\rho_l-\rho^*_{\epsilon})}}\Big)t-c(t) ,\\
b_{\epsilon}(t)&= c(t)-\Big(\frac{u_r+u^*_{\epsilon}}{2}-\frac{\epsilon}{2}+\frac{1}{2}\sqrt{\epsilon^2+\frac{2\epsilon(\rho_r+\rho^*_{\epsilon})(f(\rho_r)-f(\rho^*_{\epsilon}))}{(\rho_r-\rho^*_{\epsilon})}}\Big)t,\\
d_{\epsilon}(t)&=\Big[\frac{u_r- u_l}{2}+\frac{1}{2}\Big(\sqrt{\epsilon^2+\frac{2\epsilon(\rho_l+\rho^*_{\epsilon})(f(\rho_l)-f(\rho^*_{\epsilon}))}{(\rho_l-\rho^*_{\epsilon})}}\\
&+\sqrt{\epsilon^2+\frac{2\epsilon(\rho_r+\rho^*_{\epsilon})(f(\rho_r)-f(\rho^*_{\epsilon}))}{(\rho_r-\rho^*_{\epsilon})}}\Big)\Big] \rho_{\epsilon}^*t.\\
\end{aligned}
\end{equation*}

With the above notations, the formula for $\rho^{\epsilon}$ in equation \eqref{e3.15} can be written in the following form as in the Lemma(3.2):
\begin{equation}
\begin{aligned}
\rho^{\epsilon}=& \rho_l \displaystyle{\chi_{(-\infty, c(t)- a_{\epsilon}(t))}}(x)+\frac{ d_{\epsilon}(t)}{b_{\epsilon}(t)+ a_{\epsilon}(t)}\displaystyle{\chi_{(c(t)-a_{\epsilon}(t), c(t)+b_{\epsilon}(t))}}(x)\\
&+\rho_r\displaystyle{\chi_{(c(t)+b_{\epsilon}(t), \infty)}}(x).
\end{aligned}
\label{e3.17}
\end{equation}
Note that $a_{\epsilon}(t)$ and $b_{\epsilon}(t)$ satisfies the condition of the lemma, i.e, $a_{\epsilon}(t)>0$ and $b_{\epsilon}(t)>0$ for small $\epsilon$.

 Now we are in a position to determine the limit of $d_{\epsilon}(t)$ as $\epsilon \to 0$.
The equation \eqref{e2.11} can also be written in the following  form:

 \begin{equation}
\begin{aligned}
&(\rho_{\epsilon}^{*}+\rho_l)(u^*_{\epsilon}-u_l)=(\rho^*_{\epsilon}-\rho_l)\Big[\epsilon-\sqrt{\epsilon^2+\frac{2\epsilon(\rho^*_{\epsilon}+\rho_l)(f(\rho^*_{\epsilon})-f(\rho_l))}{(\rho^*_{\epsilon}-\rho_l)}}\Big]\\
&(\rho_{\epsilon}^{*}+\rho_r)(u^*_{\epsilon}-u_r)=(\rho^*_{\epsilon}-\rho_r)\Big[\epsilon+\sqrt{\epsilon^2+\frac{2\epsilon(\rho^*_{\epsilon}+\rho_r)(f(\rho^*_{\epsilon})-f(\rho_r))}{(\rho^*_{\epsilon}-\rho_r)}}\Big].
\label{e2.19}
\end{aligned}
\end{equation}
Subtracting second equation from the first in \eqref{e2.19}, we get
\begin{equation*}
\begin{aligned}
&\Big[(u_r-u_l)+\sqrt{\epsilon^2+\frac{2\epsilon(\rho^*_{\epsilon}+\rho_l)(f(\rho^*_{\epsilon})-f(\rho_l))}{(\rho^*_{\epsilon}-\rho_l)}}\\
&+\sqrt{\epsilon^2+\frac{2\epsilon(\rho^*_{\epsilon}+\rho_r)(f(\rho^*_{\epsilon})-f(\rho_r))}{(\rho^*_{\epsilon}-\rho_r)}}\Big]\rho^*_{\epsilon}
+\rho_l(u^*_{\epsilon}-u_l)-\rho_r(u^*_{\epsilon}-u_r)-\epsilon(\rho_r-\rho_l)\\
&=\rho_l\sqrt{\epsilon^2+\frac{2\epsilon(\rho^*_{\epsilon}+\rho_l)(f(\rho^*_{\epsilon})-f(\rho_l))}{(\rho^*_{\epsilon}-\rho_l)}}+\rho_r\sqrt{\epsilon^2+\frac{2\epsilon(\rho^*_{\epsilon}+\rho_r)(f(\rho^*_{\epsilon})-f(\rho_r))}{(\rho^*_{\epsilon}-\rho_r)}}.
\end{aligned}
\end{equation*}
Passing to the limit as $\epsilon \rightarrow  0$,  we get
\begin{equation}
\begin{aligned}
 \displaystyle {\lim_{\epsilon \rightarrow 0}}\Big[(u_r-u_l)&+\sqrt{\epsilon^2+\frac{2\epsilon(\rho^*_{\epsilon}+\rho_l)(f(\rho^*_{\epsilon})-f(\rho_l))}{(\rho^*_{\epsilon}-\rho_l)}}\\
 &+\sqrt{\epsilon^2+\frac{2\epsilon(\rho^*_{\epsilon}+\rho_r)(f(\rho^*_{\epsilon})-f(\rho_r))}{(\rho^*_{\epsilon}-\rho_r)}}\Big]\rho^*_{\epsilon}\\
& =(u_l -u_r)(\rho_l + \rho_r).
\end{aligned}
\label{e2.21}
\end{equation}
This implies 
\begin{equation}
\displaystyle {\lim_{\epsilon \rightarrow 0}} d_{\epsilon}(t)= \frac{1}{2}(u_l -u_r)(\rho_l + \rho_r)t.
\label{e2.22}
\end{equation}
Here in the calculation of \eqref{e2.22}, we have used the fact that  $\displaystyle {\lim_{\epsilon \rightarrow 0}}\,\,\, 2\epsilon\big(f(\rho^*_{\epsilon})-f(\rho_l)\big)=\displaystyle {\lim_{\epsilon \rightarrow 0}}\,\,\, 2\epsilon\big(f(\rho^*_{\epsilon})-f(\rho_r)\big)=l=\frac{1}{4}(u_l-u_r)^2$and $ \displaystyle {\lim_{\epsilon \rightarrow 0}} u^{*}_{\epsilon} =\frac{u_l +u_r}{2}$ from the equation \eqref{e2.15}. The first and the third terms of \eqref{e3.17} converge to  $\rho_l \displaystyle{\chi_{(-\infty, \frac{u_l + u_r}{2}t)}}(x)$  and \\
 $ \rho_r \displaystyle{\chi_{( \frac{u_l + u_r}{2}t, \infty)}}(x)$ respectively.
Hence, employing the above lemma to the second term of \eqref{e3.17}, we get the distribution limit $\rho(x,t)$ as given in the theorem. Note that all the analysis  has been carried out for a subsequence. Since the limit is same for any subsequence, this implies the sequence itself converges to the same limit. This completes the the proof of the theorem.
\end{proof}

Now it remains to show that the limit $(u,\rho)$ found in the theorem above, satisfies the equation \eqref{e1.1}. The limit $(u, \rho)$ satisfies the equation in the sense of Volpert is available in \cite{j3, le1}. There it was shown that $R_t + \overline{u} R_x=0$ , where $\rho= R_x$  and 
$\overline{u}R_x$ is known as Volpert product \cite{v1, d1}. Then $\rho= R_x$  satisfies the equation \eqref{e1.1} in the sense of distribution. The limit $(u, \rho)$ satisfies the equation \eqref{e1.1} is also shown in \cite{sahoo} in the sense of the following definition.
\begin{definition}[\cite{sahoo}]
 Let $u$ is a Borel measurable function and $\rho=d \nu$ is a Radon measure on $\mathbb{R}\times [0, \infty)$. Then $(u, \rho=d\nu)$ is said to be a solution for the system \eqref{e1.1} with initial data \eqref{e1.2} if the following conditions hold.
\begin{equation}
\begin{aligned}
&\int_{\mathbb{R}\times [0, \infty)}( u \phi_t + u \phi_x) dx dt + \int_{\mathbb{R}} u_0 (x)\phi(x, 0)dx=0\\
& \int_{\mathbb{R}\times [0, \infty)}( \phi_t + u \phi_x) d\nu + \int_{\mathbb{R}}\rho_0 (x)\phi(x, 0)dx=0,
\end{aligned}
\label{newdef}
\end{equation}
for any test function $\phi$ supported in $\mathbb{R}\times [0, \infty)$.
\label{defsol}
\end{definition}
Now we state the following theorem and the proof can be found in \cite{sahoo}.
\begin{theorem}[\cite{sahoo}]
For  $u_l > u_r$,  the point wise limit $u$ of $u^{\epsilon}$  and distributional limit of $\rho$ of $\rho^{\epsilon} $ satisfies the equation\eqref{newdef}.
\label{theorem3.5}
\end{theorem}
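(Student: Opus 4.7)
The plan is to verify the two identities of Definition \ref{defsol} by direct substitution of the explicit limiting profiles from Theorem \ref{theorem3.3}. For the first identity, I would use that $u$ is piecewise constant with a single jump across $x = ct$, where $c := (u_l + u_r)/2$. The Rankine--Hugoniot condition for the inviscid Burgers equation $u_t + (u^2/2)_x = 0$ is $s(u_l - u_r) = \tfrac{1}{2}(u_l^2 - u_r^2)$, which gives $s = c$ and matches the shock speed of the limit profile. Splitting the space-time integral along $x = ct$ and integrating by parts in each half-plane then yields the first identity, the Rankine--Hugoniot relation precisely killing the boundary contribution on the shock line.

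For the second identity, I would decompose
\[
d\nu = \rho_l\,\chi_{\{x < ct\}}\,dx\,dt + \rho_r\,\chi_{\{x > ct\}}\,dx\,dt + w(t)\,\delta_{x = ct},\qquad w(t) := \frac{(u_l - u_r)(\rho_l + \rho_r)}{2}\,t,
\]
and handle the absolutely continuous and singular parts separately. For the absolutely continuous parts, I would apply Fubini and integrate by parts in each half-plane. The $\phi_t$ integrals produce the initial-time contribution $-\int\rho_0(x)\phi(x,0)\,dx$ together with boundary terms along $\{x = ct\}$, which combine (after the change of variable $x \mapsto ct$) with the boundary terms arising from the $u_{l,r}\phi_x$ integrals. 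A careful book-keeping gives
\[
\int(\phi_t + u\phi_x)\,d\nu_{ac} = -\int_{\mathbb R}\rho_0(x)\,\phi(x,0)\,dx + \frac{(u_l - u_r)(\rho_l + \rho_r)}{2}\int_0^\infty \phi(ct,t)\,dt.
\]

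For the singular part, the key observation is that by the pointwise formula in Theorem \ref{theorem3.3} one has $u(ct,t) = c$, so along the shock line
\[
\phi_t + u\,\phi_x = \frac{d}{dt}\phi(ct,t).
\]
The singular contribution therefore reduces to
\[
\int_0^\infty w(t)\,\frac{d}{dt}\phi(ct,t)\,dt = -\int_0^\infty w'(t)\,\phi(ct,t)\,dt = -\frac{(u_l - u_r)(\rho_l + \rho_r)}{2}\int_0^\infty \phi(ct,t)\,dt,
\]
using $w(0) = 0$ and the compact support of $\phi$. This exactly cancels the shock-line term from the absolutely continuous part and leaves $-\int\rho_0(x)\phi(x,0)\,dx$, which is the required identity.

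The main obstacle is conceptual rather than computational: one must justify the product $u\,d\nu$ on the support of the singular part. The proof works precisely because the middle value $u(ct,t) = (u_l + u_r)/2$ prescribed by the pointwise limit of Theorem \ref{theorem3.3} coincides with the Volpert symmetric average of the trace of $u$ across the shock, and it is exactly this average for which $w'(t) = \tfrac{1}{2}(u_l - u_r)(\rho_l + \rho_r)$ balances the jump contribution from the smooth parts. Once this choice is in place, everything else is routine integration by parts and tracking of signs.
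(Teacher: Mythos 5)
Your verification is correct: the integration by parts in the two half-planes $\{x \lessgtr \frac{u_l+u_r}{2}t\}$ produces exactly the boundary term $\frac{1}{2}(u_l-u_r)(\rho_l+\rho_r)\int_0^\infty \phi(ct,t)\,dt$, and the singular part cancels it because $u(ct,t)=\frac{u_l+u_r}{2}$ turns $\phi_t+u\phi_x$ into $\frac{d}{dt}\phi(ct,t)$ with $w(0)=0$; your identification of the middle value with the Volpert average is precisely the point that makes the product $u\,d\nu$ meaningful. The present paper gives no proof of this statement, deferring it entirely to the cited reference, and your direct computation is the standard argument one expects to find there. (Note only that the first identity in Definition~\ref{defsol} is evidently a misprint for $\int(u\phi_t+\frac{u^2}{2}\phi_x)\,dx\,dt+\int u_0\phi(x,0)\,dx=0$, which is the form you correctly verify via the Burgers Rankine--Hugoniot relation $s=\frac{u_l+u_r}{2}$.)
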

\section{Entropy and entropy flux pairs}
This section is devoted to construct an explicit entropy-entropy flux pairs for the system \eqref{sc1.6} when $f(\rho)=\frac{\rho^2}{2}$, i.e Brio system. We start with the following definitions\cite{ba1} restricted to $2\times 2$ system, namely
\begin{equation}
\begin{aligned}
u_t +(f_1(u,\rho))_x &=0\\
\rho_t + (f_2 (u, \rho))_x &=0.
\end{aligned}
\label{2by2}
\end{equation}
\begin{definition}
 A continuously differentiable function $\eta:\mathbb{R}^2\mapsto \mathbb{R}$ is called an \textit{entropy} for the system\eqref{2by2} with \textit{entropy flux} $q:\mathbb{R}^2\mapsto \mathbb{R}$ if 
\begin{equation*}
D\eta(u, \rho) . Df(u, \rho)= Dq(u, \rho),
\end{equation*}
where $f(u, \rho)=(f_1(u,\rho), f_2(u,\rho))$. We say $(\eta, q)$  as entropy-entropy flux pair of the system\eqref{2by2}.
\label{entropy}
\end{definition}
\begin{definition}
A weak solution $(u, \rho)$ of the system \eqref{2by2} is called \textit{entropy admissible} if 
\begin{equation*}
\iint_{\mathbb{R}\times (0, \infty)} {\eta(u, \rho)\varphi_t+ q(u, \rho)\varphi_x} \,dx\,dt \geq 0,
\end{equation*}
for every non-negative test function $\varphi: \mathbb{R}\times (0, \infty) \rightarrow \mathbb{R}$ with compact support in $\mathbb{R}\times (0, \infty)$, where $(\eta, q)$ is the entropy-entropy flux pair as in the definition\eqref{entropy}.
\label{entropyinq}
\end{definition}
 Now for the system \eqref{sc1.6}, $f(u, \rho)= \Big(\frac{u^2}{2}+\frac{\epsilon}{2}\rho^2, \,\,\,\, u \rho-\epsilon \rho\Big)$. Therefore $(\eta,q)$ will be an entropy-entropy flux pair of \eqref{sc1.6} if 
\begin{equation*}
\begin{aligned}
\bigg(\frac{\partial \eta}{\partial u}u+\frac{\partial \eta}{\partial \rho}\rho\, ,           \,\, \epsilon\rho \frac{\partial \eta}{\partial u}+ (u-\epsilon)\frac{\partial \eta}{\partial \rho}\bigg)
=\bigg(\frac{\partial q}{\partial u},\frac{\partial q}{\partial \rho}\bigg).
\end{aligned}
\end{equation*}
That is,
\begin{equation}
\begin{aligned}
\frac{\partial q}{\partial u}=\frac{\partial \eta}{\partial u}u+\frac{\partial \eta}{\partial \rho}\rho,\\
\frac{\partial q}{\partial \rho}=\epsilon\rho\frac{\partial \eta}{\partial u}+(u-\epsilon)\frac{\partial \eta}{\partial \rho}.
\label{e3.2}
\end{aligned}
\end{equation}
Eliminating $q$ from \eqref{e3.2}, we have
\begin{equation*}
\epsilon\Big(\rho\frac{\partial^2 \eta}{\partial u^2}-\frac{\partial^2\eta}{\partial \rho\partial\rho}\Big)- \rho\frac{\partial^2 \eta}{\partial \rho^2}=0.
\end{equation*}

One can see that
\begin{equation*}
\eta(u,\rho)=\frac{1}{2}u^2 +\frac {\epsilon}{2}\rho^2
\end{equation*}
 is a solution of above the equation which is a strictly convex (since $D^2\eta > 0$)  and the corresponding entropy flux is 
 \begin{equation*}
 q(u,\rho)=\frac{1}{3}u^3 + \Big(u-\frac{\epsilon}{2}\Big)\epsilon \rho^2.
 \end{equation*}

By constructing an explicit entropy-entropy flux pairs for Brio system, we show here that our solution constructed in the previous section for Riemann type initial data ($u_l > u_r$) which can also be treated as a solution for Brio system if we plug $f(\rho)=\frac{\rho^2}{2}$ into the solution, is entropy admissible in the sense of the above definition\eqref{entropyinq}.

For that we calculate
 \begin{equation}
 \begin{aligned}
 \eta_t+ q_x=& -s_{1,\epsilon}\bigg(\frac{1}{2}u^{*2}_{\epsilon}+\frac{\epsilon}{2} \rho^{*2}_{\epsilon}-\frac{1}{2}u^{2}_{l}-\frac{\epsilon}{2}\rho^2_l\bigg)\delta_{x=s_{1,\epsilon}t}\\
 &-s_{2,\epsilon}\bigg(\frac{1}{2}u^{2}_{r}+\epsilon e^{\rho_r}-\frac{1}{2}u^{*2}_{\epsilon}-\frac{\epsilon}{2}{\rho^*_{\epsilon}}^2\bigg)\delta_{x=s_{2,\epsilon}t} \\ 
 &+\bigg(\frac{1}{3}u^{*3}_{\epsilon}+(u^*_{\epsilon}-\frac{\epsilon}{2})\epsilon \rho^{*2}_{\epsilon}-\frac{1}{3}u^3_l-(u_l-\frac{\epsilon}{2})\epsilon\rho_l^2\bigg)\delta_{x=s_{1,\epsilon}t}\\
 &+\bigg(\frac{1}{3}u^3_r-(u_r-\frac{\epsilon}{2})\rho_r^2-\frac{1}{3}u^{*3}_{\epsilon}-(u^*_{\epsilon}-\frac{\epsilon}{2})\epsilon\rho^{*2}_{\epsilon}\bigg)\delta_{x=s_{2,\epsilon}t},
\label{e3.6}
 \end{aligned}
 \end{equation}
 where $s_{1,\epsilon}$ and $s_{2,\epsilon}$ denote 1-shock speed and 2-shock speed respectively. So from \eqref{shock speed} $s_{1,\epsilon}$ and $s_{2,\epsilon}$ can be written as
 \begin{equation*}
 \begin{aligned}
 s_{1,\epsilon}=\Big(\frac{u_l+u^*_{\epsilon}}{2}-\frac{\epsilon}{2}-\frac{1}{2}\sqrt{\epsilon(\rho_l+\rho^*_{\epsilon})^2+\epsilon^2}\Big),\\
 s_{2,\epsilon}=\Big(\frac{u_r+u^*_{\epsilon}}{2}-\frac{\epsilon}{2}+\frac{1}{2}\sqrt{\epsilon(\rho_r+\rho^*_{\epsilon})^2+\epsilon^2}\Big). 
 \end{aligned}
 \end{equation*}\\
 One can observe  that to show $\eta(u,\rho)$ and $q(u,\rho)$ satisfies the entropy inequality for small $\epsilon$, we must treat the coefficients  $\delta_{x=s_1t}$ and $\delta_{x=s_2t}$ separately. We show that each of the coefficient will be negative as $\epsilon$ tends to zero. let us first consider the coefficient of $\delta_{x=s_1t}$.
  \begin{equation}
\begin{aligned}
& \textnormal{Coefficient of $\delta_{x=s_{1,\epsilon}t}$}\\&=\underbrace{-s_{1}\bigg(\frac{1}{2}u^{*2}_{\epsilon}+\frac{\epsilon}{2} \rho^{*2}_{\epsilon}-\frac{1}{2}u^{2}_{l}-\frac{\epsilon}{2}\rho^2_l\bigg)}_\text{I}+\underbrace{\bigg(\frac{1}{3}u^{*3}_{\epsilon}+(u^*_{\epsilon}-\frac{\epsilon}{2})\epsilon \rho^{*2}_{\epsilon}-\frac{1}{3}u^3_l-(u_l-\frac{\epsilon}{2})\epsilon\rho_l^2\bigg)}_\text{II}
 \label{e4.9}
\end{aligned}
 \end{equation}
 From \eqref{e2.11} we have $(u^*_{\epsilon},\rho^*_{\epsilon})$ satisfies the following equations.
\begin{equation}
\begin{aligned}
u^*_{\epsilon}-u_l=\frac{\rho^*_{\epsilon}-\rho_l}{\rho^*_{\epsilon}+\rho_l}\Big[\epsilon-\sqrt{\epsilon^2+\epsilon(\rho^*_{\epsilon}+\rho_l)^2}\Big],\\
u^*_{\epsilon}-u_r=\frac{\rho^*_{\epsilon}-\rho_r}{\rho^*_{\epsilon}+\rho_r}\Big[\epsilon-\sqrt{\epsilon^2+\epsilon(\rho^*_{\epsilon}+\rho_r)^2}\Big].
\end{aligned}
\end{equation}
 Now similarly as in Theorem $3.3$ we have
 \begin{equation*}
 \begin{split}
 u^*_{\epsilon}-u_l&=-\sqrt{l}\\
 u^*_{\epsilon}-u_r&=\sqrt{l}
 \end{split}
 \end{equation*} where
 \begin{equation}
 \displaystyle {\lim_{\epsilon \rightarrow 0}} \epsilon(\rho^*_{\epsilon}+\rho_l)^2=\displaystyle {\lim_{\epsilon \rightarrow 0}}\epsilon(\rho^*_{\epsilon}+\rho_r)^2=\displaystyle {\lim_{\epsilon \rightarrow 0}}\epsilon \rho^{*2}_{\epsilon}=\frac{1}{4}(u_r-u_l)^2.
 \label{e4.10}
 \end{equation}
 Now using \eqref{e4.10} and observing that $s_1 \rightarrow \frac{(u_l+u_r)}{2}$,  one can see 
 \begin{equation*}
 I\rightarrow \frac{-(u_l+u_r)(u_r^2-u_l^2)}{8}\,\,\, \textnormal{ as $\epsilon \rightarrow 0$}.
 \end{equation*}
 
 Again using \eqref{e4.10},  a simple calculation yields
 \begin{equation*}
 II \rightarrow \frac{(u_r^3-u_l^3)}{6}\,\,\, \textnormal{ as $\epsilon \rightarrow 0$}.
 \end{equation*}
 
 Therefore from the equation\eqref{e4.9}, 
 \begin{equation*}
  \textnormal{Coefficient of $\delta_{x=s_1t}$}= I+II \rightarrow \frac{(u_r-u_l)(u_l-u_r)^2}{24}\,\,\, \textnormal{ as $\epsilon \rightarrow 0$}.
 \label{e4.14}
 \end{equation*}
  Since $u_l>u_r$,  $ \textnormal{Coefficient of $\delta_{x=s_1t}$}= I+II <0$ for small $\epsilon$.
 In a similar way the coefficients of $\delta_{x=s_2 t}$ can be handled.
\begin{remark}
It is well known that if $\eta$ be a smooth entropy of the system \eqref{2by2} with the entropy flux $q$ and if one assumes that the Hessian $D^2\eta > 0$, then for genuinely non-linear characteristic fields the entropy inequality $\eta(u)_t+q(u)_x \leq 0$ is satisfied for sufficiently close initial data. Details can be found in \cite{ba1}. Here it is worth mentioning that our proof is independent of the closeness of the initial data, however it depends on the smallness of $\epsilon$.
\end{remark}

\section{formation of contact discontinuity and cavitation for $u_l\leq u_r$}
In this section we discuss other two cases, i.e, $u_l = u_r$ and  $u_l < u_r$. The discussion in this section is a mere repetition of the steps of \cite{sahoo} except the fact that here we have two different shock speeds. \\
\textbf{Case I $(u_l=u_r)$}:
For $u_l = u_r$, initial data is
\begin{equation*}
\begin{pmatrix}
         u_0 (x)   \\
            \rho_0 (x) \\
         \end{pmatrix}
   =
\begin{cases}
\begin{pmatrix}
         u_l  \\
            \rho_l \\
         \end{pmatrix},\,\,\,\,\,\,\, \textnormal{if}\,\,\,\,\,\,\  x<0\\
 \begin{pmatrix}
         u_l  \\
            \rho_r \\
         \end{pmatrix},\,\,\,\,\,\,\, \textnormal{if}\,\,\,\,\,\,\  x>0.
\end{cases}
\end{equation*}
Now if $\rho_l=\rho_r,$  we have the trivial solution $u(x,t)=u_l$ and $\rho(x,t) = \rho_l$.  Another two possibilities are  $\rho_r < \rho_l$ or  $\rho_r > \rho_l$.\\
\textit{Subcase I($\rho_r < \rho_l$)}: In this case, we start traveling from the state $(u_l,\rho_l)$ in the curve $R_1$  to reach at $( u^*_{\epsilon}, \rho^*_{\epsilon})$, then from $( u^*_{\epsilon}, \rho^*_{\epsilon})$ we travel by $S_2$ to reach at $(u_l,\rho_r)$.
 1-rarefaction curve $R_1$ through $(u_l,\rho_l)$ is obtained solving the differential equation
\begin{equation}
\frac{du}{d\rho}=\frac{\epsilon-\sqrt{4\epsilon \rho f^{\prime}(\rho)+\epsilon^2}}{2\rho},\,\,\,\,\,\,\,\,\,\,\, u(\rho_l)=u_l
\label{5.6}
\end{equation}
Therefore the branch of the curve satisfying \eqref{5.6} can be parameterized by a $C^1$ function $u_1:[\rho_r,\rho_l]\rightarrow[u_l,\infty)$ with parameter $\rho$.
Since $\rho > 0$,  we see that $u_1$ is decreasing. Therefore, $u_1(\rho_r) > u_l$.\\ 
Any state $(u,\rho)$ connected to the end state  $(u_l,\rho_r)$ by admissible 2-shock curve $S_2$ satisfies the following equation:
\begin{equation}
(u-u_l)=\frac{\rho-\rho_r}{\rho+\rho_r}\Big[\epsilon+\sqrt{\epsilon^2+\frac{\epsilon(\rho+\rho_r)(f(\rho)-f(\rho_r))}{(\rho-\rho_r)}}\Big],\,\,\, \rho_r <\rho < \rho_l ; \,\,\,\,u>u_l
\label{5.2}
\end{equation}
and
\begin{equation}
s >\lambda_2 ({u},  {\rho}), \,\,   \lambda_1 (u_l,\rho_r) < s < \lambda_2 (u_l, \rho_r),\,\, \textnormal{where}\,\,\,  s=\frac{\rho u-\rho_r u_l}{\rho-\rho_r}-\epsilon.
\label{5.3}
\end{equation}
Our claim is that for every fixed $\rho >\rho_r$ there exists a unique $u>u_l$ such that the equation \eqref{5.2} holds.
For that let us define
\begin{equation*}
F(u):= u-u_l.
\end{equation*}
Since $F(u_l)=0$  and $F(u) \rightarrow \infty$ as $u \rightarrow \infty$, we have $F([u_l, \infty))= [0, \infty)$. Since $\rho >\rho_r$, right hand side of \eqref{5.2} is positive. Therefore for the given $\rho> \rho_r$, there exists a $u>u_l$ such that 
\begin{equation*}
F(u)=\frac{\rho-\rho_r}{\rho+\rho_r}\Big[\epsilon+\sqrt{\epsilon^2+\frac{2\epsilon(\rho+\rho_r)(f(\rho)-f(\rho_r))}{(\rho-\rho_r)}}\Big].
\end{equation*}
Also observe that $F(u)$ is an increasing function for all $u$ since $F^{\prime}(u)=1$ , $u$ is unique for the given $\rho$.

Similarly in Theorem 3.1, the branch of the curve satisfying \eqref{5.2} and \eqref{5.3} can be parameterized by a $C^1$-function $u_2(\rho)=u_2 :[\rho_r,\rho_l]\rightarrow[u_l,\infty)$ satisfying
\begin{equation}
F(u_2(\rho))=(u_2(\rho)-u_l)=\frac{\rho-\rho_r}{\rho+\rho_r}\Big[\epsilon+\sqrt{\epsilon^2+\frac{2\epsilon(\rho+\rho_r)(f(\rho)-f(\rho_r))}{(\rho-\rho_r)}}\Big]
\label{e5.4}
\end{equation}
Note that $u_2(\rho_r)=u_l$ 
and it is clear from the above equation \eqref{e5.4} that the function $u_2$ is well defined. One can easily check that the function $u_2$ is increasing in the interval $(\rho_r,\rho_l)$.  In fact,  differentiating the above equation \eqref{e5.4} we get,
\begin{equation*}
{u_2}^{\prime}(\rho)=\frac{\epsilon (\rho-\rho_r)\frac{d}{d\rho}\Big[\frac{(\rho+\rho_r)(f(\rho)-f(\rho_r))}{(\rho-\rho_r)}\Big]}{\sqrt{\epsilon^2+\frac{2\epsilon(\rho+\rho_r)(f(\rho)-f(\rho_r))}{(\rho-\rho_r)}}}
+\Big[\epsilon+\sqrt{\epsilon^2+\frac{2\epsilon(\rho+\rho_r)(f(\rho)-f(\rho_r))}{(\rho-\rho_r)}}\Big].
\end{equation*}
Since $\rho>\rho_r$ and $\rho_r>0$, in the view of \eqref{diff3.16} right hand side of above equation is positive for any $\epsilon>0$. That is,
${u_2}^{\prime}(\rho)>0$.

From the above analysis, there exists an intermediate state $\rho^*_{\epsilon} \in (\rho_r, \rho_l)$ such that $u_1(\rho^*_{\epsilon})=u_2(\rho^*_{\epsilon})=u^*_{\epsilon}.$
Hence the solution for \eqref{sc1.6} is given by:
\begin{equation*}
\begin{aligned}
u^{\epsilon}=\left\{
	\begin{array}{llll}
		u_l  & \mbox{if } x <\lambda_1(u_l,\rho_l)t \\
		R^{u}_{1}(x/t)(u_l,\rho_l) & \mbox{if } \lambda_1(u_l,\rho_l)t< x<
		  \lambda_1(u^*_{\epsilon},\rho^*_{\epsilon})t \\
                  u^*_{\epsilon} &\mbox{if} \lambda_1(u^*_{\epsilon},\rho^*_{\epsilon})t< x< s_{2,\epsilon}(u_l,\rho_r,u^*_{\epsilon},\rho^*_{\epsilon})t
                    \\
                     u_r                   &\mbox{if }  x>s_{2,\epsilon}(u_l,\rho_r,u^*_{\epsilon},\rho^*_{\epsilon}) t
	\end{array}
\right.
\end{aligned}
\end{equation*}
and 
\begin{equation*}
\rho^{\epsilon}=\left\{
	\begin{array}{llll}
		\rho_l  & \mbox{if } x <\lambda_1(u_l,\rho_l)t \\
		R^{\rho}_{1}(x/t)(u_l,\rho_l) & \mbox{if } \lambda_1(u_l,\rho_l)t< x<
		  \lambda_1(u^*_{\epsilon},\rho^*_{\epsilon})t \\
                  \rho^*_{\epsilon} &\mbox{if} \lambda_1(u^*_{\epsilon},\rho^*_{\epsilon})t < x< s_2(u_l,\rho_r,u^*_{\epsilon},\rho^*_{\epsilon})t
                    \\
                     \rho_r                   &\mbox{if }  x>s_2(u_l,\rho_r,u^*_{\epsilon},\rho^*_{\epsilon})t
	\end{array}
\right.
\end{equation*}
Where $R_1(\xi)(\bar{u},\bar{\rho})=(R^{u}_{1}(\xi)(\bar{u},\bar{\rho}),R^{\rho}_{1}(\xi)(\bar{u},\bar{\rho}))$ and $R^{u}_{1}(\xi)(\bar{u},\bar{\rho})$ is obtained by solving 
\begin{equation*}
\frac{du}{d\xi}=\frac{\epsilon-\sqrt{4\epsilon \rho f^{\prime}(\rho)+\epsilon^2}}{2\rho},\,\,\,\,\, u(\lambda_1(\bar{u},\bar{\xi}))=\bar{u}.
\end{equation*}
and
$R^{\rho}_{1}(\xi)(\bar{u},\bar{\rho})$ is obtained by solving 
\begin{equation*}
\frac{d\rho}{d\xi}=1, \,\,\,\,\,\,\, \rho(\lambda_1(\bar{u},\bar{\xi}))=\bar{\rho}.
\end{equation*}
and 
\begin{equation*}
s_{2,\epsilon}(u_l,\rho_r,u^*_{\epsilon},\rho^*_{\epsilon})=\frac{u_l+u^*_{\epsilon}}{2}-\frac{\epsilon}{2}+\frac{1}{2}\sqrt{\epsilon^2+\frac{2\epsilon(\rho_r+\rho^*_{\epsilon})(f(\rho_r)-f(\rho^*_{\epsilon}))}{(\rho_r-\rho^*_{\epsilon})}}.
\end{equation*}\\
\textit{Sub-case II ($\rho_l < \rho_r$)}: In a similar way one can start from $(u_l,\rho_l)$ and reach at $(u^*_{\epsilon},\rho^*_{\epsilon})$ by $S_1$ and from $(u^*_{\epsilon},\rho^*_{\epsilon})$ to $ (u_l,\rho_r)$ by $R_2$. Therefore the solution is given by :
\begin{equation*}
u^{\epsilon}=\left\{
	\begin{array}{llll}
		u_l  & \mbox{if } x < s_{1,\epsilon}(u_l,\rho_l, u^*_{\epsilon}, \rho^*_{\epsilon})t \\
		u^*_{\epsilon} & \mbox{if } s_{1,\epsilon}(u_l,\rho_l, u^*_{\epsilon}, \rho^*_{\epsilon})t< x< \lambda_2(u^*_{\epsilon},\rho^*_{\epsilon})t \\
		  
                    R^{u}_{2}(x/t)(u^*_{\epsilon},\rho^*_{\epsilon}) &\mbox{if }  \lambda_2(u^*_{\epsilon},\rho^*_{\epsilon})t < x< \lambda_2(u_l,\rho_r)t
                     \\
                     u_r                   &\mbox{if }  x>\lambda_2(u_l,\rho_r)t
	\end{array}
\right.
\end{equation*}
and

\begin{equation*}
\rho^{\epsilon}=\left\{
	\begin{array}{llll}
		
		\rho_l  & \mbox{if } x < s_{1,\epsilon}(u_l,\rho_l, u^*_{\epsilon}, \rho^*_{\epsilon})t \\
		\rho^*_{\epsilon} & \mbox{if } s_{1,\epsilon}(u_l,\rho_l, u^*_{\epsilon}, \rho^*_{\epsilon})t< x<\lambda_2(u^*_{\epsilon},\rho^*_{\epsilon})t
		\\
                    R^{\rho}_{2}(x/t)(u^*_{\epsilon},\rho^*_{\epsilon}) &\mbox{if } \lambda_2(u^*_{\epsilon},\rho^*_{\epsilon})t < x<\lambda_2(u_l,\rho_r)t
                      \\
                     \rho_r                   &\mbox{if }  x>\lambda_2(u_l,\rho_r)t
	\end{array}
\right.
\end{equation*}
where $R_2(\xi)(\bar{u},\bar{\rho})=(R^{u}_{2}(\xi)(\bar{u},\bar{\rho}),R^{\rho}_{2}(\xi)(\bar{u},\bar{\rho}))$ and $R^{u}_{2}(\xi)(\bar{u},\bar{\rho})$ is obtained by solving 
\begin{equation*}
\frac{du}{d\xi}=\frac{\epsilon+\sqrt{4\epsilon \rho f^{\prime}(\rho)+\epsilon^2}}{2\rho},\,\,\,\,\, u(\lambda_2(\bar{u},\bar{\xi})=\bar{u}.
\end{equation*}
and
$R^{\rho}_{2}(\xi)(\bar{u},\bar{\rho})$ is obtained by solving 
\begin{equation*}
\frac{d\rho}{d\xi}=1, \,\,\,\,\,\,\, \rho(\lambda_2(\bar{u},\bar{\xi})=\bar{\rho}.
\end{equation*}
and
\begin{equation*}
s_{1,\epsilon}(u_l,\rho_l, u^*_{\epsilon}, \rho^*_{\epsilon})=\frac{u_l+u^*_{\epsilon}}{2}-\frac{\epsilon}{2}-\frac{1}{2}\sqrt{\epsilon^2+\frac{2\epsilon(\rho_l+\rho^*_{\epsilon})(f(\rho_l)-f(\rho^*_{\epsilon}))}{(\rho_l-\rho^*_{\epsilon})}}.
\end{equation*}\\
Now our aim is to find the limit of $(u^{\epsilon},\rho^{\epsilon})$ as $\epsilon \rightarrow  0$ in both of the above cases.
Since $\rho^*_{\epsilon} \in (\rho_l,\rho_r)$ or $\rho^*_{\epsilon} \in (\rho_r, \rho_l)$ this implies $\rho^*_{\epsilon}$ is bounded. Also from the above analysis it is evident that $\rho^*_{\epsilon}$ and $u^*_{\epsilon}$ satisfies 1-shock curve and 2-shock curve. This implies
\begin{equation}
\begin{split}
(u^*_{\epsilon}-u_l)=\frac{\rho^*_{\epsilon}-\rho_l}{\rho^*_{\epsilon}+\rho_l}\Big[\epsilon-\sqrt{\epsilon^2+\frac{2\epsilon(\rho^*_{\epsilon}+\rho_l)(f(\rho^*_{\epsilon})-f(\rho_l))}{(\rho^*_{\epsilon}-\rho_l)}}\Big],\,\,\, \rho_r >\rho^*_{\epsilon} >\rho_l ; \,\,\,\,u^*_{\epsilon}<u_l\\
(u^*_{\epsilon}-u_l)=\frac{\rho^*_{\epsilon}-\rho_r}{\rho^*_{\epsilon}+\rho_r}\Big[\epsilon+\sqrt{\epsilon^2+\frac{2\epsilon(\rho^*_{\epsilon}+\rho_r)(f(\rho^*_{\epsilon})-f(\rho_r))}{(\rho^*_{\epsilon}-\rho_r)}}\Big],\,\,\, \rho_r <\rho^*_{\epsilon} < \rho_l ; \,\,\,\,u^*_{\epsilon}>u_l
\end{split}
\label{ss5.5}
\end{equation}
Since right hand side of \eqref{ss5.5} is bounded, as $\epsilon \rightarrow 0$ we get, $\lim_{\epsilon\to 0}u^*_{\epsilon}=u_l$.
Therefore the solution $(u^{\epsilon},\rho^{\epsilon}) \rightarrow (u,\rho)$ as $\epsilon \rightarrow 0$ where $(u,\rho)$ is given by:
\begin{equation*}
\rho=\left\{
	\begin{array}{ll}
		\rho_l  & \mbox{if } x < u_l t \\
		 \rho_r  &\mbox{if }  x>u_l t.
	\end{array}
\right.
\end{equation*}
and
\begin{equation*}
u=\left\{
	\begin{array}{ll}
		u_l  & \mbox{if } x < u_l t \\
		 u_r  &\mbox{if }  x>u_l t.
	\end{array}
\right.
\end{equation*}
Since here $u_l=u_r$ we have $u \equiv u_l$.\\\\
\textbf{Case II $(u_l<u_r)$} : 
It can be observed that solution for this case is exactly same as the solution for the case $u_l<u_r$ described in \cite{sahoo}. For the sake of completeness we include here that part of the result from \cite{sahoo}. The 1st-rarefaction curve passing through $(u_l,\rho_l)$ is given by the solution of the following Cauchy problem:
\begin{equation*}
\frac{du}{d\rho}=\frac{\epsilon-\sqrt{4\epsilon \rho f^{\prime}(\rho)+\epsilon^2}}{2\rho},\,\,\,\,\,\,\,\,\,\,\, u(\rho_l)=u_l,\;\;\;\;\;\; \rho<\rho_l.
\end{equation*}
Note that for this case it does not matter whether $\rho_l<\rho_r$ or $\rho_l>\rho_r$. Therefore without loss of any generality one can take $\rho_l>\rho_r>0$. Now a branch of $R_1$ can be parameterized by a differentiable function $u_1:[0, \rho_l]\rightarrow [u_l,\infty)$ with a parameter $\rho$. Explicitly $u_1$ can be written as
\begin{equation}
u_1(\rho)-u_l=\int_{\rho}^{\rho_l} \frac{\epsilon-\sqrt{4\epsilon \xi f^{\prime}(\xi)+\epsilon^2}}{2\xi} d\xi.
\label{r5.6}
\end{equation}

Since $\rho \in [0,\rho_l]$ is bounded and $\rho>0$, the above integral goes to zero as $\epsilon$ approaches to zero. Therefore we have $u_{1}(\rho) \rightarrow u_l$ as $\epsilon \rightarrow 0$ decreasingly. Similarly, the 2nd-rarefaction curve is given by the solution of then Cauchy problem :
\begin{equation}
\frac{du}{d\rho}=\frac{\epsilon+\sqrt{4\epsilon \rho f^{\prime}(\rho)+\epsilon^2}}{2\rho},\,\,\,\,\,\,\,\,\,\,\,\;\;\;\;\;\; \rho<\rho_r,\,\,\,\,\,\,\,\,\,\, u(\rho_r)=u_r.
\label{5.20}
\end{equation}
Let $u_2:[0,\rho_r]\rightarrow(-\infty, u_r]$ is differentiable and parameterized branch of $R_2$ satisfying \eqref{5.20} and can be written as
\begin{equation*}
 u_2(\rho)-u_r=\int_{\rho}^{\rho_r}\frac{\epsilon+\sqrt{4\epsilon \xi f^{\prime}(\xi)+\epsilon^2}}{2\xi} d\xi.
\end{equation*}
Since $\rho \in [0,\rho_r]$ and $\rho>0$, using the same argument as above, we have $u_2(\rho) \rightarrow u_r$ as $\epsilon \rightarrow 0$ increasingly.
Since $u_l<u_r$, by the above calculation one can see $u_1(0)< u_2(0)$ for small $\epsilon$.
In this case the complete solution is given by:
\begin{equation}
u^{\epsilon}=\left\{
	\begin{array}{lllll}
		u_l  & \mbox{if } x < \lambda_1(u_l,\rho_l)t \\
		R^{u}_{1}(x/t)(u_l,\rho_l) & \mbox{if } \lambda_1(u_l,\rho_l)t< x < \lambda_1(u^{*(1)}_{\epsilon},0)t \\
                    x/t &\mbox{if } \lambda_1(u^{*(1)}_{\epsilon},0)t < x <  \lambda_2(u^{*(2)}_{\epsilon},0)t \\
                     R^{u}_{2}(x/t)(u^{*(2)}_{\epsilon},0) & \mbox{if }  \lambda_2(u^{*(2)}_{\epsilon},0)t <x< \lambda_2(u_r,\rho_r)t  \\
                      u_r                & \mbox{if } x>\lambda_2(u_r,\rho_r)t.
	\end{array}
\right.
\label{e5.22}
\end{equation}
and
\begin{equation}
\rho^{\epsilon}=\left\{
	\begin{array}{lllll}
		\rho_l  & \mbox{if } x < \lambda_1(u_l,\rho_l)t \\
		R^{\rho}_{1}(x/t)(u_l,\rho_l) & \mbox{if } \lambda_1(u_l,\rho_l)t< x < \lambda_1(u^{*(1)}_{\epsilon},0)t \\
                    0 &\mbox{if } \lambda_1(u^{*(1)}_{\epsilon},0)t < x <  \lambda_2(u^{*(2)}_{\epsilon},0)t \\
                     R^{\rho}_{2}(x/t)(u^{*(2)}_{\epsilon},0) & \mbox{if }  \lambda_2(u^{*(2)}_{\epsilon},0)t <x< \lambda_2(u_r,\rho_r)t  \\
                      \rho_r                & \mbox{if } x>\lambda_2(u_r,\rho_r)t,
	\end{array}
\right.
\label{last}
\end{equation}
where $R^u_1(.)$, $R^{\rho}_1(.)$, $R^u_2(.)$, $R^{\rho}_2(.)$ are defined as above.
\\
Now we find the limit of $(u^{\epsilon},\rho^{\epsilon})$ as $\epsilon \rightarrow 0$.
Since $u^{*(1)}_{\epsilon}=u_{1}(0)$, we have  $u^{*(1)}_{\epsilon}\rightarrow u_l$ and similarly $u^{*(2)}_{\epsilon} \rightarrow u_r$ as $\epsilon \rightarrow 0$.
 After passing to the limit in \eqref{e5.22} and \eqref{last} as $\epsilon$ tends to zero, we get
\begin{equation*}
u(x,t)=\left\{
	\begin{array}{llll}
		u_l  & \mbox{if } x < u_lt \\
		 x/t &\mbox{if } u_lt < x <  u_rt \\
                      u_r                & \mbox{if } x> u_rt 
	\end{array}
\right.
\end{equation*}
and
\begin{equation*}
\rho(x,t)=\left\{
	\begin{array}{llll}
		\rho_l  & \mbox{if } x < u_lt \\
		 0 &\mbox{if } u_lt < x <  u_rt \\
                      \rho_r                & \mbox{if } x> u_rt .
	\end{array}
\right.
\end{equation*}

\begin{remark}
In equation \eqref{e5.22}, one has to take $u^{\epsilon}(x,t)= \frac{x}{t}$ in the region $\lambda_1(u^{*(1)}_{\epsilon},0)t < x <  \lambda_2(u^{*(2)}_{\epsilon},0)t$. This kind of choice gives an unique entropy solution. In fact,  since $\rho=0$ in this region, the first equation of \eqref{sc1.6} becomes the well known Burgers equation and $u(x,t)=\frac{x}{t}$ is the unique entropy solution for the rarefaction case of Burgers equation. 
\end{remark}

\section{concluding remarks and further possibilities}
1. Theorem $1.1$ can be achieved by combining the results Theorem $3.1$, Theorem $3.3$ and the discussion in Section $5$. In this article, we studied the generalized Euler system when $f(\rho)$ and $f^{\prime}(\rho)$ both are increasing and $g(\rho)$ is any linear decreasing function. We observed that our analysis is still valid for some particular non linear decreasing $g(\rho)$ and particular $f(\rho)$ with the property stated above. For example, if we take $f(\rho)=\frac{\rho^2}{2}$ and $g(\rho)=-\rho^2$, the shock curves passing through $(u_l,\rho_l)$ are the following.
\begin{equation*}
s_1=\big\{(u,\rho):(u-u_l)=(\rho-\rho_l)\Big[\epsilon-\sqrt{\epsilon^2+\epsilon}\Big],\,\,\, \rho > \rho_l;\,\,\,\,u<u_l\big\},
\end{equation*}
\begin{equation*}
s_2=\big\{(u,\rho):(u-u_l)=(\rho-\rho_l)\Big[\epsilon+\sqrt{\epsilon^2+\epsilon}\Big],\,\,\, \rho < \rho_l;\,\,\,\,u<u_l\big\}
\end{equation*}
For the case $u_l>u_r$, one has the existence of intermediate state in the same way as in Theorem $3.1$, however, in this case calculations are much more simpler than the calculations presented here.  One can show that  $\lim_{\epsilon \to 0}\sqrt{\epsilon}\rho^{*}_{\epsilon}$ exists and following the steps of Theorem$3.3$, distributional limit of $(u_{\epsilon}, \rho_{\epsilon})$ as $\epsilon\to0$ can be determined. Finally, the case $u_l\leq u_r$ can be handled in a similar way as in Section $5$.\\\\
2. One can address a similar question with general $g(\rho)$.  Note that the shock curves passing through $(u_l, \rho_l)$ for any general $f(\rho)$ and $g(\rho)$, can be found in the following manner.
\begin{equation*}
s_1=\Big\{(u,\rho):(u-u_l)=\frac{g(\rho_l)-g(\rho)}{(\rho+\rho_l)}\Big[\epsilon-\sqrt{\epsilon^2+\frac{2\epsilon(\rho^2-\rho_l^2)(f(\rho)-f(\rho_l))}{(g(\rho)-g(\rho_l))^2}}\Big]\Big\},
\end{equation*}
\begin{equation*}
s_2=\big\{(u,\rho):(u-u_l)=\frac{g(\rho_l)-g(\rho)}{(\rho+\rho_l)}\Big[\epsilon+\sqrt{\epsilon^2+\frac{2\epsilon(\rho^2-\rho_l^2)(f(\rho)-f(\rho_l))}{(g(\rho)-g(\rho_l))^2}}\Big]\Big\}
\end{equation*}
Next difficulty is to choose the admissible shock curves satisfying Lax entropy inequality and show the existence of intermediate state as in theorem (3.1). Then one needs to determine the 
proper growth condition on $g$ to find the distributional limit of solutions of  the scaled system.

\end{document}